\documentclass[10pt,reqno]{amsart}
\usepackage{amsmath,amsfonts,amsthm,amssymb}
\usepackage[shortlabels]{enumitem}
\usepackage{tikz}
\usepackage{soul}
\usepackage{hyperref}

\newcommand{\R}{\mathbb{R}}
\newcommand{\N}{\mathbb{N}}

\newcommand{\supp}{\operatorname{supp}}

\newcommand{\bs}{\setminus}

\theoremstyle{plain}
\newtheorem{theorem}{Theorem}[section]
\newtheorem{lemma}[theorem]{Lemma}

\newtheorem{proposition}[theorem]{Proposition}

\theoremstyle{definition}
\newtheorem{remark}[theorem]{Remark}
\newtheorem{definition}[theorem]{Definition}
\newtheorem{example}[theorem]{Example}

\numberwithin{equation}{section}

\begin{document}
\title[A note on irreducibility for topical maps]{A note on irreducibility for topical maps}
\author[B. Lins]{Brian Lins}
\date{}
\address{Brian Lins, Hampden-Sydney College}
\email{blins@hsc.edu}
\subjclass[2010]{Primary 47H07; Secondary 15A80}
\keywords{Topical map, nonlinear Perron-Frobenius theory, irreducible, Boolean satisfiability}

\begin{abstract}
Topical maps are a nonlinear generalization of nonnegative matrices acting on the interior of the standard cone $\R^n_{\ge 0}$. Several analogues of irreducibility have been defined for topical maps, and all are sufficient to guarantee the existence of entrywise positive eigenvectors.  In this note, we organize several of these notions, showing which conditions are stronger and when different types of irreducibility are equivalent.  We also consider how to computationally check the conditions. We show that certain irreducibility conditions can be expressed as Boolean satisfiability problems that can be checked using SAT solvers. This can be used to confirm the existence of entrywise positive eigenvectors when the dimension is large. 
\end{abstract}

\maketitle

The \emph{standard cone} in $\R^n$ is the set $\R^n_{\ge 0} = [0,\infty)^n$ of entrywise nonnegative vectors in $\R^n$. Its interior is $\R^n_{>0}$, the set of vectors with all positive entries. For $x, y \in [-\infty, \infty]^n$, we say that $x \ge y$ if $x_i \ge y_i$ for all $i \in [n]$ and $x \gg y$ if $x_i > y_i$ for all $i \in [n]$.  Let $D \subseteq [-\infty, \infty]^n$.  A function $f: D \rightarrow [-\infty, \infty]^n$ is \emph{order-preserving} if $f(x) \le f(y)$ for all $x, y \in D$ with $x \le y$.  We say that $f$ is \emph{homogeneous} if $f(\lambda x) = \lambda f(x)$ for all $x \in D$ and all positive real constants $\lambda$. A \emph{multiplicatively topical map} (shortened to \emph{m-topical} for convenience), is a function $f:\R^n_{\ge 0} \rightarrow \R^n_{\ge 0}$ that is continuous, order-preserving, homogeneous, and leaves the interior of the standard cone invariant, that is $f(\R^n_{>0}) \subseteq \R^n_{>0}$. 

In the literature, the term \emph{topical map} is usually reserved for \emph{additively topical maps} which are functions $T: \R^n \rightarrow \R^n$ that are order-preserving and additively homogeneous, that is $T(x + \lambda \mathbf{1}) = T(x) + \lambda \mathbf{1}$ for all $x \in \R^n$ and $\lambda \in \R$ where $\mathbf{1} \in \R^n$ is the vector with all entries equal to 1.  Note that if $T$ is topical and $\log$ and $\exp$ denote the entrywise natural logarithm and exponential functions, then the map  $f = \exp \circ T \circ \log$ is order-preserving and homogeneous on $\R^n_{>0}$. As such, it is known \cite[Corollary 2]{BurbanksSparrow99} (see also \cite[Corollary 4.6]{BurbanksNussbaumSparrow03} and \cite[Theorem 5.1.5]{LemmensNussbaum}) that $f$ extends uniquely to a continuous, order-preserving, and homogeneous map on all of $\R^n_{\ge 0}$, and therefore there is a one-to-one correspondence between additively and multiplicatively topical maps. 

Examples of topical maps include max-plus linear maps, min-max systems (\cite{Gunawardena04}, \cite{YaZh04}), Shapely operators for games \cite{AkianGaubertHochart20}, the Menon operator used in the Sinkhorn-Knopp algorithm \cite{BrualdiParterScheider66}, models in population biology \cite{Nussbaum89}, and certain nonlinear maps associated with nonnegative tensors \cite{ChPeZh08, FriedlandGaubertHan13, HuQi16, Lins23}. 
 
According to the Perron-Frobenius theorem, a nonnegative irreducible square matrix has a unique (up to scaling) eigenvector with all positive entries.  Several generalizations of irreducibility have been proposed for m-topical maps \cite{Morishima64, SchneiderTurner72, Oshime83, GaubertGunawardena04}.  Each is sufficient to guarantee the existence of eigenvectors in $\R^n_{>0}$. They do not typically guarantee uniqueness of the eigenvector, except in some special circumstances such as for real analytic m-topical maps \cite[Theorem 5.1]{Lins23}.  

The purpose of this note is twofold.  First we seek to organize the various generalizations of irreducibility and show how they relate. In section \ref{sec:types} we describe three main types of irreducibility for topical maps.  In section \ref{sec:almost} we describe some other combinatorial conditions for topical maps that also guarantee the existence of an eigenvector with positive entries. 

The second objective of this note is to introduce a method for checking irreducibility conditions using SAT solvers.  The irreducibility conditions we describe depend solely on the behavior of the maps on the boundary of the standard cone (and the inverted standard cone $(0,\infty]^n$). These conditions are combinatorial in nature, but the number of conditions to be checked can grow exponentially as the dimension increases.  Therefore these methods can become intractable in large dimensions.  However, in section \ref{sec:SAT} we describe how to restate some of the most general irreducibility conditions as Boolean satisfiability problems. This suggests that SAT solvers can be a powerful tool to certify that an m-topical map has an eigenvector with positive entries.  
We conclude in section \ref{sec:unique} by observing that the problem of checking whether an entrywise positive eigenvector is unique up to scaling can also be expressed as a Boolean satisfiability problem.

\section{Preliminaries} \label{sec:pre}

We begin with a quick review of some basic results from non-linear Perron-Frobenius theory. First, every m-topical map has an eigenvector in $\R^n_{\ge 0}$ corresponding to its cone spectral radius \cite[Corollary 5.4.2]{LemmensNussbaum}. 

\begin{proposition} \label{prop:KR}
An m-topical map $f$ on $\R^n_{\ge 0}$ has an eigenvector $x \in \R^n_{\ge 0}$ with eigenvalue equal to the cone spectral radius
$$r(f) := \lim_{k \rightarrow \infty} \|f^k(u)\|^{1/k}$$
where $u$ is any element of $\R^n_{>0}$. 
\end{proposition}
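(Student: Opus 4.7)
The plan is to produce an eigenvector via a perturbation-plus-Brouwer argument and then identify its eigenvalue with $r(f)$. First I would introduce the perturbed map $f_\epsilon(x) := f(x) + \epsilon(\sum_i x_i)\mathbf{1}$ for each $\epsilon > 0$. Each $f_\epsilon$ is m-topical and, crucially, now sends all of $\R^n_{\ge 0}\setminus\{0\}$ strictly into $\R^n_{>0}$. The normalized map $g_\epsilon(x) := f_\epsilon(x)/\sum_j f_\epsilon(x)_j$ is a continuous self-map of the compact convex simplex $\Delta := \{x \in \R^n_{\ge 0} : \sum_i x_i = 1\}$, so Brouwer's theorem gives a fixed point $x_\epsilon \in \Delta \cap \R^n_{>0}$, yielding $f_\epsilon(x_\epsilon) = \lambda_\epsilon x_\epsilon$ with $\lambda_\epsilon > 0$.

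Next, I would take $\epsilon \to 0^+$. The values $\lambda_\epsilon$ are uniformly bounded (use $x_\epsilon \le \mathbf{1}$ and order-preservation to control $f_\epsilon(x_\epsilon)$), so compactness lets me extract a subsequence along which $x_\epsilon \to x \in \Delta$ and $\lambda_\epsilon \to \lambda$. Continuity of $f$ then gives $f(x) = \lambda x$, producing a nonnegative eigenvector.

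It remains to identify $\lambda$ with $r(f)$. The upper bound $\lambda \le r(f)$ is elementary: for any $u \in \R^n_{>0}$ choose $C > 0$ with $x \le Cu$; then order-preservation and homogeneity give $\lambda^k x = f^k(x) \le C f^k(u)$, so $\lambda \le \lim_k \|f^k(u)\|^{1/k} = r(f)$. For the reverse direction, pointwise monotonicity $f_\epsilon \ge f$ yields $r(f_\epsilon) \ge r(f)$ by the same comparison. The main obstacle is the identification $\lambda_\epsilon = r(f_\epsilon)$: one must show that the interior eigenvalue of a strongly positive m-topical map equals its cone spectral radius. The standard route uses Birkhoff--Hopf contractivity of $f_\epsilon$ in Hilbert's projective metric to deduce uniqueness of the projective fixed point of $g_\epsilon$ and then to pin its eigenvalue to $r(f_\epsilon)$ via comparison of iterates; this is the nontrivial input that the cited corollary in Lemmens--Nussbaum packages. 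Once it is in hand, $\lambda = \lim \lambda_\epsilon = \lim r(f_\epsilon) \ge r(f)$, and combined with the upper bound the proof concludes.
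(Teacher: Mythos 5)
The paper offers no proof of this proposition at all---it is quoted directly from Lemmens--Nussbaum---so your perturbation-plus-Brouwer argument is an actual proof where the paper only cites, and it is essentially the standard route to the nonlinear Krein--Rutman theorem for polyhedral cones. The construction of the eigenvector is sound: $f_\epsilon$ is m-topical and sends $\Delta$ into $\R^n_{>0}$, Brouwer gives $x_\epsilon\gg 0$ with $f_\epsilon(x_\epsilon)=\lambda_\epsilon x_\epsilon$, the bound $x_\epsilon\le\mathbf{1}$ controls $\lambda_\epsilon$, and passing to the limit yields $f(x)=\lambda x$ with $x\in\Delta$ nonzero. Your upper bound $\lambda\le r(f)$ is also fine.

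The one step that needs repair is the identification $\lambda_\epsilon=r(f_\epsilon)$, which you flag as the nontrivial imported input and propose to obtain from Birkhoff--Hopf contractivity and uniqueness of the projective fixed point of $g_\epsilon$. That mechanism does not apply here: Birkhoff--Hopf is a theorem about \emph{linear} maps, and a nonlinear order-preserving homogeneous map whose image has finite diameter in Hilbert's projective metric need not be a strict contraction nor have a unique projective fixed point (for instance $h(x)=(\max(x_1,x_2/2),\,\max(x_1/2,x_2))$ has projectively bounded image but fixes both $(1,1)$ and $(1,2)$). Fortunately, uniqueness is not needed and the identification is elementary: if $v\gg 0$ satisfies $f_\epsilon(v)=\mu v$, choose $c,C>0$ with $cv\le u\le Cv$; order-preservation and homogeneity give $c\mu^k v\le f_\epsilon^k(u)\le C\mu^k v$, hence $\|f_\epsilon^k(u)\|^{1/k}\to\mu$ and $\mu=r(f_\epsilon)$. (The same sandwich shows $r(f)$ is independent of $u$ and exists, via Fekete applied to $u=\mathbf{1}$, which your write-up implicitly assumes.) With this one-line argument in place of the Birkhoff--Hopf appeal, combined with your observation $r(f_\epsilon)\ge r(f)$, the proof is complete and self-contained.
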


\emph{Hilbert's projective metric} on $\R^n_{>0}$ is defined by 
$$d_H(x,y) = \inf \{\log (\beta/\alpha) : \alpha x \le y \le \beta x \}.$$
Observe that $d_H(ax, by) = d_H(x,y)$ for all $a, b > 0$.  
An m-topical map $f:\R^n_{\ge 0} \rightarrow \R^n_{\ge 0}$ is always \emph{nonexpansive} with respect to Hilbert's projective metric, that is, for all $x, y \in \R^n_{>0}$, $d_H(f(x), f(y)) \le d_H(x,y)$ \cite[Corollary 2.1.4]{LemmensNussbaum}.  
Although m-topical maps always have entrywise nonnegative eigenvectors, they do not always have eigenvectors in the interior of the standard cone.  The following well known result gives a necessary and sufficient condition for the existence of an entrywise positive eigenvector \cite[Proposition 6.3.1]{LemmensNussbaum}

\begin{proposition} \label{prop:boundedorbit}
Let $f$ be an m-topical map on $\R^n_{\ge 0}$. There is an eigenvector $x \in \R^n_{>0}$ if and only if there exists a nonempty subset $W \subset \R^n_{>0}$ such that $W$ is bounded in Hilbert's projective metric and $f(W) \subseteq W$.
\end{proposition}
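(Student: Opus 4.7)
For the forward direction, I would take $W := \{tx : t > 0\}$, the positive ray through an entrywise positive eigenvector $x$ with $f(x) = \lambda x$; this satisfies $f(W) = W$ and has $d_H$-diameter zero since $d_H(sx, tx) = 0$ for all $s,t > 0$.

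For the reverse direction, the plan is to reduce to a fixed-point problem for a renormalized map. Setting $\Sigma^\circ := \{y \in \R^n_{>0} : \sum_i y_i = 1\}$, I would define $g \colon \Sigma^\circ \to \Sigma^\circ$ by $g(y) := f(y)/\|f(y)\|_1$; this is continuous, $d_H$-nonexpansive (inherited from $f$), and its fixed points correspond exactly to normalized positive eigenvectors of $f$. The set $\tilde W := \{y/\|y\|_1 : y \in W\}$ is $g$-invariant by homogeneity of $f$ and, by scaling invariance of $d_H$, inherits the $d_H$-diameter bound $M$ of $W$. The key geometric observation is that $d_H$-boundedness pushes $\tilde W$ away from $\partial \Sigma$: fixing $x_0 \in \tilde W$, the normalization $\sum_i y_i = \sum_i (x_0)_i = 1$ together with $d_H(y,x_0) \le M$ forces $e^{-M} x_0 \le y \le e^{M} x_0$, so $D := \cl{\tilde W}$ is a compact $g$-invariant subset of $\Sigma^\circ$.

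The main obstacle is to extract an actual fixed point of $g$ from $D$. Since $g$ is generally not affine, the closed convex hull $\cl{\conv D}$ is compact convex in $\Sigma^\circ$ but need not be $g$-invariant, so Brouwer does not apply to $g|_{\cl{\conv D}}$ directly. My preferred workaround is to pass to logarithmic coordinates: set $T := \log \circ f \circ \exp$, an additively topical map on $\R^n$, and let $\bar T$ denote the induced map on $\R^n/\R\mathbf{1} \cong \R^{n-1}$, which is nonexpansive in the quotient sup-norm; the projection of $\log W$ then furnishes a bounded $\bar T$-invariant set. A fixed point of $\bar T$ corresponds to some $x \in \R^n$ with $T(x) = x + c\mathbf{1}$, i.e., to an eigenvector $\exp(x) \in \R^n_{>0}$ of $f$ with eigenvalue $e^c$. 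To produce such a fixed point, I would invoke (or reprove, by approximating $\bar T$ with strict contractions $\bar T_\epsilon(y) := (1-\epsilon)\bar T(y) + \epsilon y_0$, using Banach's theorem to obtain approximate fixed points $y_\epsilon$, and extracting a convergent subsequence as $\epsilon \to 0$ whose boundedness is controlled by the bounded $\bar T$-orbit) a Nussbaum-type theorem asserting that a sup-norm nonexpansive self-map of $\R^k$ with a bounded orbit has a fixed point.
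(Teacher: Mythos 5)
Your argument is correct, but note that the paper does not actually prove this proposition---it is quoted from the literature (Lemmens--Nussbaum, Proposition 6.3.1)---so you are supplying a proof where the paper supplies a citation. The forward direction (the ray through an eigenvector) is fine. For the converse, you correctly identify the real obstacle: the $d_H$-bounded invariant set is not convex, so Brouwer cannot be applied directly to the normalized map, and your detour through logarithmic coordinates is the standard and natural fix. The map $T=\log\circ f\circ\exp$ is order-preserving and additively homogeneous, hence sup-norm nonexpansive, the induced map $\bar T$ on $\R^n/\R\mathbf{1}$ is nonexpansive for the quotient norm (which is equivalent to Hilbert's metric under $\log$), and the projection of $\log W$ gives a bounded $\bar T$-invariant set, hence a bounded orbit. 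The one step that deserves care is the boundedness of the approximate fixed points $y_\epsilon$ of $\bar T_\epsilon=(1-\epsilon)\bar T+\epsilon y_0$: this does follow from the bounded orbit, since if $\|\bar T^k(y_0)-y_0\|\le R$ for all $k$, the estimate $\|\bar T_\epsilon^{k+1}(y_0)-\bar T^{k+1}(y_0)\|\le(1-\epsilon)\|\bar T_\epsilon^{k}(y_0)-\bar T^{k}(y_0)\|+\epsilon R$ gives $\|\bar T_\epsilon^{k}(y_0)-y_0\|\le 2R$ uniformly in $k$ and $\epsilon$, so $\|y_\epsilon-y_0\|\le 2R$ and a limit point of $(y_\epsilon)$ is a fixed point of $\bar T$, hence an eigenvector $\exp(x)\in\R^n_{>0}$ of $f$. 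So the proof is sound; what your route buys is a self-contained elementary argument resting only on Banach's contraction principle rather than an appeal to the cited fixed-point theorem.
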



Let $f:\R^n_{\ge 0} \rightarrow \R^n_{\ge 0}$ be an m-topical map. A vector $x \in \R^n_{>0}$ is a \emph{super-eigenvector} of $f$ with super-eigenvalue $\beta > 0$ if $f(x) \le \beta x$.  It is a \emph{sub-eigenvector} with sub-eigenvalue $\alpha > 0$ if $\alpha x \le f(x)$.  For any $\alpha, \beta > 0$, we define the \emph{super-eigenspace} associated with $\beta$ to be
$$S^\beta(f) := \{x \in \R^n_{>0} : f(x) \le \beta x \}$$
and the \emph{sub-eigenspace} associated with $\alpha$ is
$$S_\alpha(f) := \{x \in \R^n_{>0} : \alpha x \le f(x) \}.$$
The intersection of a super and a sub-eigenspace is called a \emph{slice space}, and is denoted
$$S_\alpha^\beta(f) := \{x \in \R^n_{>0} : \alpha x \le f(x) \le \beta x\}.$$
Each of the sets $S_\alpha(f)$, $S^\beta(f)$, and $S_\alpha^\beta(f)$ is invariant under $f$.  Therefore, if any of these sets is nonempty and bounded in Hilbert's projective metric, then $f$ has an eigenvector in $\R^n_{>0}$ by Proposition \ref{prop:boundedorbit}. 

Note that all three of these sets are defined as subsets of the interior of the standard cone; however it is possible for an m-topical map to have other super and sub-eigenvectors on the boundary of $\R^n_{\ge 0}$.  A nontrivial sub-eigenvector in the boundary would be a non-zero vector $x \in \partial \R^n_{\ge 0}$ such that $f(x) \ge \alpha x$ for some $\alpha > 0$. Likewise, a nontrivial super-eigenvector in the boundary would be a non-zero $x \in \partial \R^n_{\ge 0}$ such that $f(x) \le \beta x$ for some $\beta < \infty$.  

The \emph{inverted standard cone} is the set $(0,\infty]^n$. The topology on $(0,\infty]^n$ comes from the order-topology on the extended real line. The following result is due to Burbanks and Sparrow \cite[Corollary 2]{BurbanksSparrow99} (see also \cite[Corollary 4.6]{BurbanksNussbaumSparrow03}).

\begin{proposition} \label{prop:extend}
Any m-topical map on $\R^n_{\ge 0}$ extends continuously to $(0,\infty]^n$.
\end{proposition}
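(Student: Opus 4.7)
The plan is to build the extension by monotone approximation from $\R^n_{>0}$, then verify well-definedness and continuity using order-preservation and homogeneity.

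For $x \in (0,\infty]^n$, set $x^{(k)} \in \R^n_{>0}$ equal to the coordinatewise truncation $x^{(k)}_i := \min(x_i,k)$, so $x^{(k)}$ is an increasing sequence converging to $x$ in the order topology. Order-preservation makes $f(x^{(k)})$ increasing in $\R^n_{>0}$, and since $f(x^{(k)}) \ge f(x^{(1)}) \gg 0$, its coordinatewise supremum lies in $(0,\infty]^n$; I define $f(x)$ to be that supremum. For well-definedness, let $y^{(k)} \in \R^n_{>0}$ be any other increasing sequence converging to $x$. For each fixed $k$ and each $\epsilon \in (0,1)$ one can find $m$ large with $x^{(m)} \ge (1-\epsilon)\, y^{(k)}$ coordinatewise: coordinates where $x_i < \infty$ stabilize at $x_i \ge y^{(k)}_i$, and coordinates where $x_i = \infty$ have $x^{(m)}_i = m \to \infty$. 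Order-preservation and homogeneity yield $f(x^{(m)}) \ge (1-\epsilon)\, f(y^{(k)})$, and the limits $m \to \infty$, $\epsilon \to 0$, $k \to \infty$ together with the symmetric roles of the two sequences identify the two limits.

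Continuity of the extension is the main obstacle. At points of $\R^n_{>0}$ it reduces to the given continuity of $f$ together with the well-definedness argument above. At a point $x$ with some coordinates equal to $\infty$, write $J := \{j : x_j < \infty\}$ and consider $z^{(k)} \to x$ in the order topology of $(0,\infty]^n$. Lower semicontinuity follows at once from the construction: for every $\epsilon > 0$ and every truncation index $M$, eventually $z^{(k)} \ge (1-\epsilon)\, x^{(M)}$, so $f(z^{(k)}) \ge (1-\epsilon)\, f(x^{(M)})$, and sending $M \to \infty$ then $\epsilon \to 0$ gives $\liminf_k f(z^{(k)}) \ge f(x)$. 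For the matching upper bound at a coordinate $i$ with $f(x)_i < \infty$, I would pass to the additively topical conjugate $T = \log \circ f \circ \exp$, which is sup-norm nonexpansive by Crandall--Tartar, and use the nonexpansiveness together with monotonicity to show that $T_i(\log z^{(k)})$ differs from $T_i(\log x^{(M)})$ by at most an arbitrarily small additive constant once $k$ and $M$ are taken large enough in the appropriate order, giving $\limsup_k T_i(\log z^{(k)}) \le T_i(\log x)$. Coordinates where $f(x)_i = \infty$ need no upper bound since lower semicontinuity alone forces convergence in the order topology, and transporting back via exponentiation completes the argument.
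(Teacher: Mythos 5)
The paper does not prove Proposition \ref{prop:extend}; it cites Burbanks--Sparrow and Burbanks--Nussbaum--Sparrow. So the comparison is between your direct argument and the standard one from those references, which your construction essentially follows: define the extension as the supremum over increasing truncations, check well-definedness, then verify continuity. Your definition, well-definedness argument, and lower-semicontinuity estimate are all correct.

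The gap is in the upper bound, which you yourself flag as ``the main obstacle'' and then only announce rather than carry out. As stated, the plan is to use sup-norm nonexpansiveness of $T = \log \circ f \circ \exp$ to compare $T_i(\log z^{(k)})$ with $T_i(\log x^{(M)})$; but when $x$ has infinite coordinates, $\|\log z^{(k)} - \log x^{(M)}\|_\infty$ is large (indeed tends to $\infty$ with $k$ for fixed $M$), so nonexpansiveness applied to that pair gives nothing, and the claimed ``arbitrarily small additive constant'' does not follow from what you have written. What is actually needed is a one-sided comparison against $x$ itself rather than against its truncations: since $z^{(k)}_j \to x_j$ in each coordinate, for any $\epsilon>0$ one eventually has $z^{(k)} \le (1+\epsilon)\,x$ in $(0,\infty]^n$ (the coordinates with $x_j=\infty$ impose no constraint). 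Your sup construction makes the extension order-preserving and homogeneous on all of $(0,\infty]^n$ (if $z \le w$ then $z^{(M)} \le w^{(M)}$ for every $M$, and $(\lambda x)^{(M)} = \lambda x^{(M/\lambda)}$), so $f(z^{(k)}) \le (1+\epsilon) f(x)$ eventually, giving $\limsup_k f(z^{(k)})_i \le f(x)_i$ in every coordinate, finite or not. Combined with your lower bound this closes the argument, with no appeal to Crandall--Tartar at all. So the missing ingredient is precisely the identification of the correct majorant $(1+\epsilon)x$ together with the observation that the extension itself is order-preserving and homogeneous; until that is supplied, the decisive step of the proof is asserted rather than proved.
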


Because $f$ extends continuously to $(0,\infty]^n$, we can also consider nontrivial super and sub-eigenvectors on the boundary of $(0, \infty]^n$. A vector $x \in (0,\infty]^n$ is a sub-eigenvector of an m-topical map $f$ if $x$ has at least one finite entry and $f(x) \ge \alpha x$ for some $\alpha > 0$. A vector $y \in (0,\infty]^n$ is a super-eigenvector if $y$ has at least one finite entry and $f(y) \le \beta y$ for some real $\beta > 0$.  For $x \in [0, \infty]^n$ we define its \emph{support} to be the set $\supp(x) = \{i \in [n] : 0 <  x_i < \infty\}$.

A map $f: \R^n_{\ge 0} \rightarrow \R^n_{\ge 0}$ is \emph{subadditive} if $f(x + y) \le f(x) + f(y)$ for all $x, y \in \R^n_{\ge 0}$.  If $f$ is homogeneous, then $f$ is subadditive if and only if $f$ is convex. Note that linear m-topical maps are subadditive, as are linear maps in the max-times algebra. 

If $f:\R^n_{\ge 0} \rightarrow \R^n_{\ge 0}$ is m-topical, then we say that $f$ is \emph{multiplicatively convex} (or just \emph{m-convex}) 
if $f(x^\lambda y^{1- \lambda}) \le f(x)^{\lambda} f(y)^{1- \lambda}$ for all $x, y \in \R^n_{>0}$ and $0 < \lambda < 1$, where the exponents and products are understood to be taken in each entry. Note that any sum or composition of m-convex m-topical maps is m-convex \cite[Lemma 4.1]{Lins23}.  A large class $\mathcal{M}_+$ of maps studied by Nussbaum in \cite{Nussbaum89} (see also \cite{LemmensNussbaum}) are m-convex m-topical maps.  These include a class of m-topical maps related to nonnegative tensors that have been widely studied \cite{ChPeZh08, FriedlandGaubertHan13, HuQi16, Lins23}.  

\begin{lemma}
If $f: \R^n_{\ge 0} \rightarrow \R^n_{\ge 0}$ is m-topical and subadditive, then $f$ is m-convex.  
\end{lemma}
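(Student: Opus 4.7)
The plan is to prove the inequality componentwise by representing each component $f_i$ as a supremum of nonnegative linear functionals and then applying H\"older's inequality within each such functional.

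It suffices to show that for each $i \in [n]$, each $x, y \in \R^n_{>0}$, and each $\lambda \in (0,1)$, $f_i(x^\lambda y^{1-\lambda}) \le f_i(x)^\lambda f_i(y)^{1-\lambda}$. To obtain a dual representation, I would extend $f_i$ to a function $\tilde f_i: \R^n \rightarrow \R$ by $\tilde f_i(x) := f_i(x_+)$, where $x_+$ denotes the entrywise positive part of $x$. The entrywise inequality $(x+y)_+ \le x_+ + y_+$, combined with the order-preservation and subadditivity of $f$, shows that $\tilde f_i$ is subadditive; positive homogeneity follows from the positive homogeneity of both $x \mapsto x_+$ and $f$. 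Hence $\tilde f_i$ is a real-valued sublinear functional on $\R^n$, and the standard Hahn--Banach sup representation of sublinear functions gives a nonempty compact convex $A_i \subseteq \R^n$ with $\tilde f_i(x) = \sup_{a \in A_i} \inner{a, x}$. Plugging in $x = -e_j$ yields $-a_j \le \tilde f_i(-e_j) = f_i(0) = 0$ for every $a \in A_i$ and every $j \in [n]$, so $A_i \subseteq \R^n_{\ge 0}$. Restricting back to $x \in \R^n_{\ge 0}$ gives the representation $f_i(x) = \sup_{a \in A_i} \inner{a, x}$ with $A_i \subseteq \R^n_{\ge 0}$.

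Next, for each fixed $a \in A_i$ and each $x, y \in \R^n_{>0}$, H\"older's inequality applied entrywise via $a_j = a_j^\lambda \cdot a_j^{1-\lambda}$ gives
$$\sum_j a_j x_j^\lambda y_j^{1-\lambda} = \sum_j (a_j x_j)^\lambda (a_j y_j)^{1-\lambda} \le \Big(\sum_j a_j x_j\Big)^\lambda \Big(\sum_j a_j y_j\Big)^{1-\lambda} \le f_i(x)^\lambda f_i(y)^{1-\lambda},$$
where the last step uses $\sum_j a_j z_j \le f_i(z)$ for every $z \in \R^n_{\ge 0}$. Taking the supremum over $a \in A_i$ on the left yields $f_i(x^\lambda y^{1-\lambda}) \le f_i(x)^\lambda f_i(y)^{1-\lambda}$, which assembles componentwise into the desired entrywise inequality.

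The main potential obstacle is the sup representation of $f_i$ by \emph{nonnegative} supporting functionals. The existence of the representation itself is classical Hahn--Banach for sublinear functions; the essential observation is that the order-preservation of $f$ is exactly what forces the elements of $A_i$ to lie in $\R^n_{\ge 0}$. Without this sign control, the exponent $a_j^\lambda$ would not even be real for mixed-sign $a_j$, and the H\"older step would collapse.
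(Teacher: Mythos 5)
Your proof is correct and follows essentially the same route as the paper's: both arguments use the fact that a homogeneous subadditive map is convex to represent each component by nonnegative linear functionals (the paper picks a single subgradient matrix at the point $x^\lambda y^{1-\lambda}$, you use the full Hahn--Banach sup representation after a positive-part extension), and both then conclude via H\"older's inequality for nonnegative linear functionals. Your version spells out the nonnegativity of the supporting functionals more explicitly, but the underlying idea is the same.
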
 

\begin{proof}
Since $f$ is homogeneous and subadditive, it is a convex function. Suppose that $x, y \in \R^n_{>0}$ and $z = x^{\lambda} y^{1-\lambda}$ for some $0< \lambda < 1$.  Since $f$ is convex, there is a matrix $A \in \R^{n \times n}$ that is a subdifferential for $f$ at $z$, that is, $f(z) = Az$ and $f(v) \ge Av$ for all $v \in \R^n_{>0}$.

Since $f$ is order-preserving, it follows that the entries of $A$ are nonnegative.  The linear transformation associated with $A$ maps $\R^n_{>0}$ into itself, so it is a multiplicatively convex map on $\R^n_{>0}$ (see e.g., \cite[Lemma 4.1]{Lins23}). Therefore 
$$f(z) = Az = A(x^\lambda y^{1-\lambda}) \le (Ax)^\lambda (Ay)^{1-\lambda} \le f(x)^\lambda f(y)^{1- \lambda}.$$
\end{proof}

\section{Types of Irreducibility} \label{sec:types}

Here we consider three different irreducibility conditions for m-topical maps. The (closed) \emph{faces} of $\R^n_{\ge 0}$ are the sets 
\begin{equation} \label{eq:face}
F_J := \{ x \in \R^n_{\ge 0} : x_i = 0 \text{ for all } i \notin J \}
\end{equation}
where $J \subseteq [n]$. A face $F_J$ is \emph{proper} if $J \neq [n]$, and it is \emph{nontrivial} if $J$ is nonempty.

\begin{definition} \label{def:facial}
An m-topical map is \emph{facially irreducible} if it does not leave any nontrivial proper closed faces of $\R^n_{\ge 0}$ invariant. 
\end{definition}

A second notion of irreducibility was proposed by Gaubert and Gunawardena \cite{GaubertGunawardena04}. For any m-topical map $f$ on $\R^n_{\ge 0}$, they define a directed graph $G(f)$ that generalizes the adjacency graph of a matrix.  For any subset $J \subseteq [n]$, let $e_J \in \R^n$ be the vector with entries
$$(e_J)_i = \begin{cases} 
1 & \text{ if } i \in J \\
0 & \text{ otherwise.} \end{cases}$$ 
The graph $G(f)$ has nodes $[n]$, and an arc from node $i$ to node $j$ if 
$$\lim_{t \rightarrow \infty} f(\mathbf{1} + t e_{\{j\}})_i = \infty.$$ 
\begin{definition} \label{def:graphical}
An m-topical map $f$ is \emph{graphically irreducible} if $G(f)$ is strongly connected.  
\end{definition}

Gaubert and Gunawardena \cite{GaubertGunawardena04} also suggest another irreducibility condition, which they refer to as indecomposability.  

\begin{definition} \label{def:indecomposable}
An m-topical map $f$ is \emph{decomposable} if there is a nonempty proper subset $J \subset [n]$ such that $$\lim_{t \rightarrow \infty} f(\mathbf{1} + te_J)_i < \infty$$
for all $i \in [n] \bs J$.  In other words, 
\begin{equation} \label{eq:omega}
\omega_J := \lim_{t \rightarrow \infty} \mathbf{1} + te_J \in (0,\infty]^n
\end{equation} 
is a super-eigenvector for $f$. The map $f$ is \emph{indecomposable} if it not decomposable. 
\end{definition}

Gaubert and Gunawardena proved that an m-topical map is indecomposable if and only if every super-eigenspace $S^\beta(f)$ is bounded in Hilbert's projective metric on $\R^n_{>0}$ \cite[Theorem 5]{GaubertGunawardena04}. 

\begin{theorem} \label{thm:connect1}
Let $f:\R^n_{\ge 0} \rightarrow \R^n_{\ge 0}$ be an m-topical map.  
\begin{enumerate}
\item \label{item:face=>indecomp} If $f$ is facially or graphically irreducible, then $f$ is indecomposable. 
\item \label{item:mconvex1} If $f$ is m-convex and indecomposable, then $f$ is graphically irreducible.  
\item \label{item:subadditive} If $f$ is subadditive and indecomposable, then it is facially irreducible. 
\end{enumerate}
\end{theorem}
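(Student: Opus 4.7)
The proof plan is to establish each of (\ref{item:face=>indecomp})--(\ref{item:subadditive}) by contrapositive, translating each combinatorial condition into the existence or absence of a super-eigenvector of the form $\omega_J$ from Definition \ref{def:indecomposable}.

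For (\ref{item:face=>indecomp}), I would assume $f$ is decomposable with witness $J \subsetneq [n]$, so that $f(\omega_J)_i < \infty$, equivalently $\lim_{t \to \infty} f(\mathbf{1} + te_J)_i < \infty$, for every $i \in [n] \setminus J$. The graphical half follows at once: since $\mathbf{1} + te_{\{j\}} \le \mathbf{1} + te_J$ for every $j \in J$, order-preservation gives the same finite limit for $f(\mathbf{1} + te_{\{j\}})_i$, so no arc of $G(f)$ leaves $J$ and $G(f)$ is not strongly connected. For the facial half, I would use homogeneity to write
$$f\bigl(\tfrac{1}{t}\mathbf{1} + e_J\bigr) = \tfrac{1}{t}\,f(\mathbf{1} + te_J),$$
so continuity of $f$ as $t \to \infty$ forces $f(e_J)_i = 0$ for $i \notin J$. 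Since any $x \in F_J$ satisfies $x \le (\max_{j \in J} x_j)\,e_J$, order-preservation and homogeneity then give $f(x) \in F_J$, and $F_J$ is a nontrivial proper invariant face.

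Part (\ref{item:subadditive}) follows along similar lines. If $F_J$ is a nontrivial proper invariant face, I would decompose
$$\mathbf{1} + te_J = e_{[n] \setminus J} + (1+t)\,e_J,$$
apply subadditivity and homogeneity to obtain $f(\mathbf{1} + te_J) \le f(e_{[n] \setminus J}) + (1+t)\,f(e_J)$, and then observe that invariance forces $f(e_J)_i = 0$ for $i \notin J$, while $f(e_{[n] \setminus J})_i$ is a finite constant. Hence $f(\mathbf{1} + te_J)_i$ is bounded in $t$ for every $i \notin J$, so $\omega_J$ is a super-eigenvector, contradicting indecomposability.

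The main obstacle is part (\ref{item:mconvex1}), which must combine single-vertex bounds into a multi-vertex bound. If $G(f)$ is not strongly connected, I would choose $J$ to be the vertex set of a sink strongly connected component, so that for each $j \in J$ and each $i \notin J$, $f(\mathbf{1} + te_{\{j\}})_i$ stays bounded as $t \to \infty$. I would then pass to the additively topical conjugate $T = \log \circ f \circ \exp$, which is convex because $f$ is m-convex. Setting $k = |J|$, the identity $s\,e_J = \frac{1}{k}\sum_{j \in J}(ks)\,e_{\{j\}}$ together with Jensen's inequality yields
$$T(s\,e_J)_i \le \frac{1}{k}\sum_{j \in J} T(ks\,e_{\{j\}})_i,$$
and each summand on the right is nondecreasing in $s$ (by order-preservation) and bounded above as $s \to \infty$ for $i \notin J$. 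Exponentiating back, $f(\mathbf{1} + te_J)_i$ is bounded in $t$ for $i \notin J$, so $\omega_J$ is a super-eigenvector and $f$ is decomposable, contradicting indecomposability. The subtle point is recognizing that convexity in the additive coordinates is precisely the ingredient that lets one collapse $k$ separate single-vertex bounds into a single $k$-vertex bound.
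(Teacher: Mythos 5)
Parts (\ref{item:face=>indecomp}) and (\ref{item:subadditive}) of your proposal are correct and are contrapositive restatements of the paper's own arguments (the paper argues directly: facial irreducibility gives $f(e_J)_i>0$ for some $i\notin J$, hence $f(\mathbf{1}+te_J)_i\ge tf(e_J)_i\to\infty$; subadditivity gives $f(\mathbf{1}+te_J)\le f(\mathbf{1})+tf(e_J)$). Your Jensen step in part (\ref{item:mconvex1}) is likewise exactly the paper's argument: the entrywise geometric mean of $\{\mathbf{1}+te_{\{j\}}: j\in J\}$, conjugated by $\log$ and $\exp$, so nothing is gained or lost there.

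The one genuine error is your choice of $J$ in part (\ref{item:mconvex1}). With the paper's orientation of $G(f)$ --- an arc from $i$ to $j$ precisely when $\lim_{t\to\infty} f(\mathbf{1}+te_{\{j\}})_i=\infty$ --- the boundedness you need, namely $\lim_{t\to\infty} f(\mathbf{1}+te_{\{j\}})_i<\infty$ for all $j\in J$ and $i\notin J$, says that no arc goes from $J^c$ \emph{into} $J$. A sink (final) strongly connected component is one with no arcs going \emph{out} of it, and it can perfectly well have incoming arcs: for $f(x)=(x_2,x_2)$ the sink class is $\{2\}$, yet $f(\mathbf{1}+te_{\{2\}})_1=1+t\to\infty$, so $J=\{2\}$ does not witness decomposability (whereas $J=\{1\}$ does). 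The fix is local: take $J$ to be an initial (source) class, or equivalently the complement of a final class $I$ as the paper does; then no arc enters $J$ from $J^c=I$, the single-vertex bounds hold for every $i\notin J$, and your Jensen argument finishes the proof. The same orientation slip appears harmlessly in part (\ref{item:face=>indecomp}), where ``no arc of $G(f)$ leaves $J$'' should read ``no arc enters $J$ from outside''; there the conclusion (failure of strong connectivity) is unaffected.
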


\begin{proof}
Proof of \eqref{item:face=>indecomp}.  Let $J$ be any nonempty proper subset of $[n]$. If $f$ is facially irreducible, there must be an $i \in [n] \bs J$ such that $f(e_J)_i > 0$. Then 
$$\lim_{t \rightarrow \infty} f(\mathbf{1} + t e_J)_i \ge \lim_{t \rightarrow \infty} t f(e_J)_i = \infty.$$
So we conclude that $f$ is indecomposable.  

If $G(f)$ is strongly connected, then for any nonempty proper subset $J \subset [n]$, there must be an edge from some $i \notin J$ to some $j \in J$.  So 
$$\lim_{t \rightarrow \infty} f(\mathbf{1} + t e_J)_i \ge \lim_{t \rightarrow \infty} f(\mathbf{1} + t e_{\{j\}})_i = \infty.$$
Therefore $f$ is indecomposable. 

~\\
Proof of \eqref{item:mconvex1}. 
Suppose that $f$ is m-convex and indecomposable, but $G(f)$ is not strongly connected.  Let $I$ be a final class of $G(f)$ and let $J := [n] \bs I$.  Since $I$ is final, there are no edges from $i \in I$ to any $j \in J$.  Therefore 
$$\lim_{t \rightarrow \infty} f(\mathbf{1} + t e_{\{j\}})_i < \infty$$
for each $i \in I$ and $j \in J$.  
Observe that the entrywise geometric mean of the vectors $\{ \mathbf{1} + t e_{\{j\}} : j \in J\}$ is $\mathbf{1} + t^{1/|J|} e_J$. Since $f$ is m-convex, it follows that 
$$f(\mathbf{1} + t^{1/|J|} e_J)_i  \le \left( \prod_{j \in J} f(\mathbf{1} + t e_{\{j\}})_i \right)^{1/|J|}$$
for every $i \in I$. Therefore 
$$\lim_{t \rightarrow \infty} f(\mathbf{1} + t e_J)_i =\lim_{t \rightarrow \infty} f(\mathbf{1} + t^{1/|J|} e_J)_i  \le \left( \prod_{j \in J} \lim_{t \rightarrow \infty} f(\mathbf{1} + t e_{\{j\}})_i \right)^{1/|J|} < \infty.$$
Since this holds for all $i \notin J$, we have contradicted the assumption that $f$ is indecomposable, so we conclude that $G(f)$ must be strongly connected and therefore $f$ is graphically irreducible.  


~\\
Proof of \eqref{item:subadditive}.
Suppose that $f$ is subadditive and indecomposable.  Consider any nonempty proper subset $J \subset [n]$. 
Since $f$ is indecomposable, there must be at least one $i \in [n] \bs J$ such that 
$$\lim_{t \rightarrow \infty} f(\mathbf{1} + te_J)_i  = \infty.$$
Then by subadditivity, 
$$\lim_{t \rightarrow \infty} f(\mathbf{1})_i + tf(e_J)_i  = \infty.$$
So we conclude that $f(e_J)_i > 0$, and therefore the face $F_J$ defined by \eqref{eq:face} cannot be invariant.  This proves that $f$ is facially irreducible. 
\end{proof}

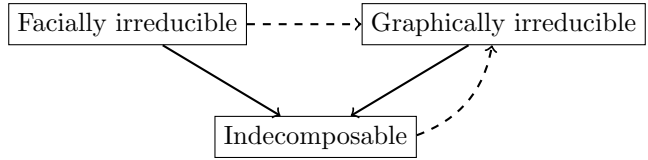
\begin{figure}[h] \label{fig:irreducibility}
\begin{center}
\begin{tikzpicture}
\path (0,0) node[draw,shape=rectangle,scale=1.0] (I1) {Facially irreducible};
\path (5,0) node[draw,shape=rectangle,scale=1.0] (I2) {Graphically irreducible};
\path (2.5,-1.5) node[draw,shape=rectangle,scale=1.0] (I3) {Indecomposable};

\draw[thick,dashed,->] (I1) to (I2);
\draw[thick,dashed,bend right,->] (I3) to (I2);
\draw[thick,->] (I1) to (I3);
\draw[thick,->] (I2) to (I3);

\end{tikzpicture}
\end{center}
\caption{Summary of Theorem \ref{thm:connect1}. Facial and graphical irreducibility both imply indecomposability for all m-topical maps. Graphical irreducibility is equivalent to indecomposability and weaker than facial irreducibility for m-convex m-topical maps (dashed edges). For subadditve m-topical maps, all three types of irreducibility are equivalent. }
\end{figure}

\begin{example}
Graphical irreducibility has been called weak irreducibility in some some contexts (see e.g., \cite{FriedlandGaubertHan13}).  Graphical irreducibility is weaker than facial irreducibility for m-convex maps, but not for m-topical maps in general.  For example, the following map is facially irreducible, but is not graphically irreducible.  
$$f(x) = \begin{bmatrix} x_2 + x_3 \\ x_1 + x_3 \\ (x_1^{-1} + x_2^{-1})^{-1} \end{bmatrix}.$$
\end{example}

\section{Other conditions like irreducibility} \label{sec:almost}

Two elements $x, y \in [0,\infty]^n$ are \emph{comparable} if there exist real $\alpha, \beta > 0$ such that $\alpha x \le y \le \beta x$.  Comparability is an equivalence relation on $\R^n_{\ge 0}$ and also on $(0,\infty]^n$. The equivalence classes are called the \emph{parts} of the (inverted) standard cone.  Each part of the standard cone has the form 
\begin{equation} \label{eq:part}
P_J = \{ x \in \R^n_{\ge 0} : x_i > 0 \text{ if and only if } i \in J \}
\end{equation}
where $J$ is a subset of $[n]$. If $x,y \in \R^n_{\ge 0}$ are comparable and $f: \R^n_{\ge 0} \rightarrow \R^n_{\ge 0}$ is order-preserving and homogeneous, then $f(x)$ and $f(y)$ are also comparable.  In particular, $f$ maps parts to parts. This suggest another definition, which is similar to, but weaker than, facial irreducibility. 
\begin{definition} \label{def:partial}
An m-topical map $f$ is \emph{partially irreducible} if $f$ has no invariant parts other than the trivial parts $P_\varnothing = \{0\}$ and $P_{[n]} = \R^n_{>0}$. 
\end{definition}
Unlike the previous notions, partial irreducibility does not correspond to irreducibility for nonnegative matrices.  For example, the linear transformation defined by the matrix 
$$A = \begin{bmatrix} 0 & 1 \\ 0 & 1 \end{bmatrix}$$ 
is partially irreducible even though $A$ is not an irreducible matrix.  It is immediately clear from the definition that partial irreducibility is implied by facial irreducibility.  It turns out that partial irreducibility is not implied by either graphical irreducibility or indecomposability.  

\begin{example}
Let $f:\R^2_{\ge 0} \rightarrow \R^2_{\ge 0}$ be defined by
$$f(x) = \begin{bmatrix} x_1 + \sqrt{x_1 x_2} \\ x_2 + \sqrt{x_1 x_2} \end{bmatrix}.$$
Then $f$ is an m-convex, m-topical map that is graphically irreducible and therefore indecomposable.  It is not partially or facially irreducible however, since the parts $P_{\{1\}}$ and $P_{\{2 \}}$ are both invariant.  
\end{example}

Akian, Gaubert, and Hochart gave a combinatorial necessary and sufficient condition for all slice spaces of an m-topical map to be bounded in Hilbert's projective metric in \cite{AkianGaubertHochart20}. For reasons that will become clear in Theorem \ref{thm:slice}, we refer to their condition as imperturbability. 

\begin{definition} \label{def:imperturb}
An m-topical map $f:\R^n_{\ge 0} \rightarrow \R^n_{\ge 0}$ is \emph{imperturbable}
if there are no nonempty subsets $I \subseteq J \subsetneq [n]$ such that $f(e_I)_i > 0$ for all $i \in I$ and 
$$\lim_{t \rightarrow \infty} f(\mathbf{1} + t e_J)_i < \infty$$
for all $i \notin J$. In other words, there is no $I \subseteq J$ such that $e_I$ is a sub-eigenvector and $\omega_J$ defined by \eqref{eq:omega} is a super-eigenvector for $f$. 
\end{definition}

The following characterization of imperturbable maps is \cite[Theorem 1.2]{AkianGaubertHochart20}. 
We include a proof because our notation and terminology is somewhat different than theirs. 
If $f, g$ are m-topical maps on $\R^n_{\ge 0}$, we say that $g$ is a \emph{Hilbert metric uniform perturbation} of $f$ if the Hilbert's projective metric distance between $f$ and $g$ is uniformly bounded on $\R^n_{>0}$.  

\begin{theorem} \label{thm:slice}
Let $f: \R^n_{\ge 0} \rightarrow \R^n_{\ge 0}$ be m-topical. The following are equivalent. 
\begin{enumerate}
\item \label{item:def} $f$ is imperturbable.
\item \label{item:slice} All slice spaces $S_\alpha^\beta(f)$ are bounded in Hilbert's projective metric. 
\item \label{item:D} $D \circ f$ has an eigenvector in $\R^n_{>0}$ for all diagonal matrices $D$ with positive diagonal entries. 
\item \label{item:perturb} Any m-topical map $g$ on $\R^n_{\ge 0}$ that is a Hilbert metric uniform perturbation of $f$ has an eigenvector in $\R^n_{>0}$. 
\end{enumerate}
\end{theorem}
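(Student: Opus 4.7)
My plan is to establish the four equivalences via the cycle $(1) \Rightarrow (2) \Rightarrow (3) \Rightarrow (1)$, together with the easy implication $(4) \Rightarrow (3)$ (since $D \circ f$ is itself a Hilbert metric uniform perturbation of $f$ with constant $\log(\max_i d_i / \min_i d_i)$ independent of $x$) and the main remaining step $(2) \Rightarrow (4)$.

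For $(1) \Rightarrow (2)$, I argue by contrapositive. If $S_\alpha^\beta(f)$ is unbounded in $d_H$, pick $(x_k)$ inside it with $d_H(x_k, \mathbf{1}) \to \infty$ and extract two boundary limit points. Normalizing $x_k$ by $\max_i x_{k,i}$ and passing to a subsequence yields $x^* \in \R^n_{\ge 0}$ with $J := \supp x^*$ nonempty and proper; continuity of $f$ on $\R^n_{\ge 0}$ and the slice inequalities $\alpha x^* \le f(x^*) \le \beta x^*$ force $f(e_J)_i > 0$ for $i \in J$ (via the comparison $x^* \ge c\, e_J$), so $e_J$ is a sub-eigenvector. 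Normalizing instead by $\min_i x_{k,i}$ and using Proposition~\ref{prop:extend} to pass to a limit $y^* \in (0,\infty]^n$ with support $I$ disjoint from $J$ and nonempty, a parallel argument shows $f(\omega_{[n] \setminus I})_i < \infty$ for $i \in I$, making $\omega_{[n] \setminus I}$ a super-eigenvector. Since $J \subseteq [n] \setminus I \subsetneq [n]$, the pair contradicts Definition~\ref{def:imperturb}.

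For $(2) \Rightarrow (3)$, the inequality $\alpha x \le Df(x) \le \beta x$ implies $(\alpha/\max_i d_i) x \le f(x) \le (\beta/\min_i d_i) x$, so $S_\alpha^\beta(Df) \subseteq S_{\alpha/\max_i d_i}^{\beta/\min_i d_i}(f)$ is bounded by (2). Choosing $\alpha = \min_i (Df(\mathbf{1}))_i$ and $\beta = \max_i (Df(\mathbf{1}))_i$ makes this slice space contain $\mathbf{1}$, so it is nonempty and $Df$-invariant; Proposition~\ref{prop:boundedorbit} then yields a positive eigenvector of $Df$. For $(3) \Rightarrow (1)$, I argue by contrapositive. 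Given nonempty $I \subseteq J \subsetneq [n]$ with $f(e_I)_i \ge \lambda > 0$ for $i \in I$ and $f(\omega_J)_i \le M < \infty$ for $i \notin J$, I set $d_i = K$ for $i \in I$ and $d_i = 1$ elsewhere, with $K > M/\lambda$. If $Df(y) = \mu y$ for some $y \in \R^n_{>0}$, then the comparison $y \ge (\min_{i \in I} y_i)\, e_I$ combined with order preservation and homogeneity yields $\mu y_i \ge (\min_{i \in I} y_i)\, K\lambda$ for $i \in I$, which at an index attaining $\min_{i \in I} y_i$ gives $\mu \ge K\lambda$. Similarly, $y \le (\max_{i \notin J} y_i)\, \omega_J$ (entrywise, using $\omega_J = \infty$ on $J$) yields $\mu y_i \le (\max_{i \notin J} y_i)\, M$ for $i \notin J$, and at the maximizing index gives $\mu \le M$. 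The contradiction $K\lambda \le \mu \le M$ rules out such $y$.

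For $(2) \Rightarrow (4)$, the plan is to mimic the strategy of $(1) \Rightarrow (2)$: if $g$ is a Hilbert metric uniform perturbation of $f$ and some $S_\alpha^\beta(g)$ were unbounded, a limit-point analysis on the boundaries of $\R^n_{\ge 0}$ and of $(0, \infty]^n$ produces candidate sub- and super-eigenvectors for $g$, and the $d_H$-proximity transfers this structure to $f$, contradicting imperturbability of $f$. Nonemptiness of some slice space of $g$ is witnessed by $\mathbf{1}$, so once boundedness is established, Proposition~\ref{prop:boundedorbit} yields a positive eigenvector of $g$. The main obstacle will be carefully tracking the pointwise ratios $r_i(x) = g_i(x)/f_i(x)$ in the boundary limits, where three regimes (ratios tending to finite positive limits, uniformly to $0$, or uniformly to $\infty$) must be handled separately, each yielding the forbidden $(I,J)$-pair for $f$.
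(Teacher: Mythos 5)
Most of your argument is correct and is genuinely more self-contained than the paper's: the paper simply cites Hochart's theorem for the equivalence of (2), (3), (4) (transported via $\log/\exp$) and only proves (1)~$\Rightarrow$~(2) and (3)~$\Rightarrow$~(1) by hand. Your (1)~$\Rightarrow$~(2) coincides with the paper's. Your (2)~$\Rightarrow$~(3) (containment of slice spaces of $D\circ f$ in slice spaces of $f$, plus Proposition~\ref{prop:boundedorbit}) and your (4)~$\Rightarrow$~(3) are correct and elementary. Your (3)~$\Rightarrow$~(1) differs from the paper's: you derive the contradictory eigenvalue bounds $K\lambda \le \mu \le M$ directly, whereas the paper shows the orbit of $\mathbf{1}$ under $D\circ f$ is sandwiched as $2^k e_I \le (D\circ f)^k(\mathbf{1}) \le 2^{-k}\omega_J$ and hence $d_H$-unbounded, contradicting nonexpansiveness. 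Both work; yours is slightly shorter.

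The genuine gap is (2)~$\Rightarrow$~(4), which you only sketch and where you explicitly leave the ``three regimes'' of the ratios $g_i(x)/f_i(x)$ unresolved. This is the one implication that does not follow from a routine repetition of (1)~$\Rightarrow$~(2): if $x\in S_\alpha^\beta(g)$ and $d_H(f(x),g(x))\le C$, you only get $c(x)\,x \le f(x) \le R\,c(x)\,x$ with $R=(\beta/\alpha)e^{C}$ fixed but $c(x)>0$ uncontrolled, so along your unbounded sequence the sub-eigenvalue for $f$ may drift to $0$ (killing the sub-eigenvector conclusion) or to $\infty$. The step can be completed, and here is the missing observation: pass to a subsequence so that $c_k\to c_\infty\in[0,\infty]$. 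If $c_\infty=0$, then the normalized limits satisfy $f(y^*)_i=0$ for every $i$ with $y^*_i<\infty$; since $y^*\ge\mathbf{1}$, monotonicity forces $f(\mathbf{1})_i=0$, contradicting $f(\R^n_{>0})\subseteq\R^n_{>0}$. If $c_\infty=\infty$, then $f(x^*)_i=\infty$ for some $i$ with $x^*\le\mathbf{1}$, again impossible. So only the regime $c_\infty\in(0,\infty)$ survives, and there your (1)~$\Rightarrow$~(2) argument applies verbatim. Equivalently: imperturbability implies that $\bigcup_{c>0}S_c^{Rc}(f)$ is $d_H$-bounded for each fixed $R$, and $S_\alpha^\beta(g)$ sits inside such a union. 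You should either carry out this case analysis or, as the paper does, invoke \cite[Theorem 5.1]{Hochart19}; as written, the proposal does not yet prove (2)~$\Rightarrow$~(4).
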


\begin{proof}
The equivalence of conditions \eqref{item:slice}, \eqref{item:D}, and \eqref{item:perturb} is proved in \cite[Theorem 5.1]{Hochart19} in the additive setting.  The corresponding equivalence for multiplicatively topical maps follows since $\log \circ f \circ \exp$ is additively topical.  

\eqref{item:def} $\Rightarrow$ \eqref{item:slice}. Suppose by way of contradiction that $S_\alpha^\beta(f)$ is not bounded in Hilbert's projective metric.  Then we can choose a sequence $x^k \in S_\alpha^\beta$ that is not bounded in Hilbert's projective metric.  By passing to a subsequence, we can assume that the sequences $x^k/(\max_i x^k_i)$ and $x^k/(\min_i x^k_i)$ converge to $x \in [0,\infty)^n$ and $y \in (0,\infty]^n$ respectively.  By continuity, $f(x) \ge \alpha x$ and $f(y) \le \beta y$.  Since the sequence $x^k$ is not bounded in Hilbert's projective metric, the ratio of $\max_i x^k_i / \min_i x^k_i$ goes to infinity as $k \rightarrow \infty$.  Therefore $\supp(x) \cap \supp(y) = \varnothing$.  Let $I := \supp(x)$ and let $J := [n] \bs \supp(y)$ so that $I \subseteq J$. Since $e_I$ is comparable to $x$, we have $f(e_I)_i > 0$ for all $i \in I$. There is also a constant $c > 0$ such that $\mathbf{1} + t e_J \le c y$ for all $t > 0$. Therefore
$$\lim_{t \rightarrow \infty} f(\mathbf{1} + te_J)_i \le c f(y)_i < \infty.$$
This contradicts the assumption that $f$ is imperturbable.  


\eqref{item:D} $\Rightarrow$ \eqref{item:def}. Suppose that $f$ is not imperturbable. Then there exist nonempty subsets $I \subseteq J \subsetneq [n]$ such that $f(e_I)_i > 0$ for all $i \in I$ and $\lim_{t \rightarrow \infty } f(\mathbf{1} + te_J)_i < \infty$ for all $i \notin J$.  Let $\omega_J$ be defined as in \eqref{eq:omega}.
Since $f$ extends continuously to $(0,\infty]^n$ by Proposition \ref{prop:extend}, there is a finite constant $\beta > 0$ such that $f(\omega_J) \le \beta \omega_J$. There is also an $\alpha > 0$ such that $f(e_I) \ge \alpha e_I$.  
Observe that 
$$e_I \le \mathbf{1} \le \omega_J.$$
Let $D \in \R^{n \times n}$ be a diagonal matrix with positive diagonal entries $d_{ii}$ such that  $d_{ii} > 2 \alpha^{-1}$ when $i \in I$ and $d_{ii} < \tfrac{1}{2} \beta^{-1}$ when $i \notin J$. Then 
$$2^k e_I \le (D \circ f)^k(\mathbf{1}) \le 2^{-k} \omega_J$$
for all $k \in \N$.  This means that the forward orbit of $\mathbf{1}$ under the map $D \circ f$ is unbounded in Hilbert's projective metric. However, if $D \circ f$ had an eigenvector $u \in \R^n_{>0}$, then the Hilbert metric ball $W := \{x \in \R^n_{>0} : d_H(x, u) \le d_H(\mathbf{1}, u) \}$ would contain $(D\circ f)^k(\mathbf{1})$ for every $k \in \N$ by nonexpansiveness. 
Therefore $D \circ f$ cannot have an eigenvector in $\R^n_{>0}$.
\end{proof}

\begin{theorem} \label{thm:connect2}
Let $f:\R^n_{\ge 0} \rightarrow \R^n_{\ge 0}$ be m-topical. 
\begin{enumerate}
\item If $f$ is partially irreducible, then it is imperturbable.
\item If $f$ is subadditive and imperturbable, then it is partially irreducible.
\end{enumerate}
\end{theorem}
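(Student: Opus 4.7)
The plan is to prove both directions by contraposition, exploiting that $f$ maps parts to parts: if we set $\sigma(K) := \supp(f(e_K))$, then $f(P_K) \subseteq P_{\sigma(K)}$, and $P_K$ is invariant if and only if $\sigma(K) = K$. So partial irreducibility amounts to $\sigma$ having no fixed points on $2^{[n]}$ besides $\varnothing$ and $[n]$.

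For (1), suppose $f$ is not imperturbable, with witnessing subsets $\varnothing \neq I \subseteq J \subsetneq [n]$ such that $f(e_I) \ge \alpha e_I$ and $f(\omega_J) \le \beta \omega_J$ for some $\alpha, \beta > 0$. I iterate $\sigma$ from $K_0 := I$, setting $K_{k+1} := \sigma(K_k)$. The sub-eigenvector condition gives $K_0 \subseteq K_1$, and since $e_{K_{k-1}} \le e_{K_k}$ implies $f(e_{K_{k-1}}) \le f(e_{K_k})$ by order-preservation, the chain $(K_k)$ is non-decreasing. The crux is that it stays inside $J$, for which I would first prove the sublemma $\supp(f(e_J)) \subseteq J$: from $s e_J \le \mathbf{1} + s e_J$ for every $s > 0$, homogeneity and order-preservation give
$$s f(e_J)_i \;\le\; f(\mathbf{1} + s e_J)_i,$$
and for $i \notin J$ the right side tends (using the continuous extension of $f$ to $(0,\infty]^n$) to $f(\omega_J)_i \le \beta$, forcing $f(e_J)_i = 0$. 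Combined with $e_{K_k} \le e_J$, this yields $\supp(f(e_{K_k})) \subseteq J$, i.e.\ $K_{k+1} \subseteq J$. The increasing chain thus stabilizes inside $J$ at some $K^*$ with $\varnothing \neq I \subseteq K^* \subseteq J \subsetneq [n]$, so $P_{K^*}$ is a nontrivial proper invariant part, contradicting partial irreducibility.

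For (2), suppose $f$ is subadditive and $P_K$ is a nontrivial proper invariant part, so $\supp(f(e_K)) = K$: strictly positive on $K$, zero off $K$. Take $I = J = K$. The sub-eigenvector condition $f(e_I)_i > 0$ for $i \in I$ is immediate, and subadditivity together with homogeneity give, for each $i \notin K$,
$$f(\mathbf{1} + t e_K)_i \;\le\; f(\mathbf{1})_i + t f(e_K)_i \;=\; f(\mathbf{1})_i,$$
which stays finite as $t \to \infty$. So $\omega_K$ is a super-eigenvector, and $f$ is not imperturbable.

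The subtle step I expect to be the main obstacle is the sublemma $\supp(f(e_J)) \subseteq J$ in (1): the hypothesis $f(\omega_J) \le \beta \omega_J$ combined with $e_J \le \omega_J$ only gives finiteness of $f(e_J)_i$ for $i \notin J$, not vanishing. The trick of scaling by $s$ and comparing $s e_J$ to $\mathbf{1} + s e_J$ is precisely what lets homogeneity convert the finite bound $\beta$ into the required zero, and this is the only place in the argument where the super-eigenvector hypothesis on $\omega_J$ is genuinely used.
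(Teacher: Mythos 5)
Your proof is correct, and part (1) takes a genuinely different route from the paper, while part (2) coincides with the paper's argument essentially verbatim. For (1) the paper does not argue combinatorially at all: it observes that partial irreducibility is inherited by $D\circ f$ for every positive diagonal $D$, that Proposition \ref{prop:KR} supplies a nonnegative eigenvector of $D\circ f$ with positive eigenvalue, that the support of this eigenvector generates an invariant part (so the eigenvector must lie in $\R^n_{>0}$), and then invokes the equivalence \eqref{item:def}$\Leftrightarrow$\eqref{item:D} of Theorem \ref{thm:slice}. Your contrapositive argument instead iterates the support map $\sigma(K)=\supp(f(e_K))$ from $I$, traps the increasing chain inside $J$ via the sublemma $\supp(f(e_J))\subseteq J$, and extracts a fixed point $K^*$ with $I\subseteq K^*\subseteq J$. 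All the steps check out: the scaling trick $sf(e_J)_i\le f(\mathbf{1}+se_J)_i\le\lim_{t\to\infty}f(\mathbf{1}+te_J)_i<\infty$ correctly upgrades finiteness to vanishing for $i\notin J$, monotonicity of $\sigma$ follows from order-preservation, and the stabilized $K^*$ gives a nontrivial proper invariant part because $f$ maps parts to parts. What your approach buys is self-containedness and a sharper conclusion --- it shows that any witness $(I,J)$ to decomposability-with-sub-eigenvector actually produces an invariant part sandwiched between $I$ and $J$, without appealing to eigenvector existence or to the perturbation characterization of Theorem \ref{thm:slice}; what the paper's route buys is brevity given machinery it has already established and will reuse elsewhere.
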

\begin{proof}
(1). If $f$ is partially irreducible, then so is $D \circ f$ for any diagonal matrix $D \in \R^{n \times n}$ with positive diagonal entries. Since $D \circ f$ is m-topical there is a constant $c > 0$ such that $(D \circ f)(\mathbf{1}) \ge c \mathbf{1}$.  Since $D \circ f$ is order-preserving and homogeneous, it follows that the spectral radius $r(D \circ f) \ge c > 0$.  This guarantees that $D \circ f$ has an eigenvector $x \in \R^n_{\ge 0}$ with eigenvalue $r(D \circ f)$ by Proposition \ref{prop:KR}. Since $D \circ f$ has no nontrivial invariant parts, $x$ must be in $R^n_{>0}$.  Since $D \circ f$ has an eigenvector in $\R^n_{>0}$ for every diagonal matrix $D$, Theorem \ref{thm:slice} implies that $f$ is imperturbable. 

(2). Suppose that $f$ subadditive but not partially irreducible.  Then there is a nonempty proper subset $J \subset [n]$ and constants $0 < \alpha < \beta$ such that 
$$\alpha e_J \le f(e_J) \le \beta e_J.$$
In particular, $f(e_J)_i > 0$ for all $i \in J$.  At the same time, 
$$\lim_{t \rightarrow \infty} f(\mathbf{1} + t e_J)_i \le \lim_{t \rightarrow \infty} f(\mathbf{1})_i + t f(e_J)_i = f(\mathbf{1})_i < \infty$$
for every $i \notin J$.  Therefore $f$ is not imperturbable.
\end{proof}

\begin{figure}[h] \label{fig:irreducibility2}
\begin{center}
\begin{tikzpicture}
\path (0,0) node[draw,shape=rectangle,scale=1.0] (I1) {Facially irreducible};
\path (5,0) node[draw,shape=rectangle,scale=1.0] (I2) {Graphically irreducible};
\path (2.5,-1.5) node[draw,shape=rectangle,scale=1.0] (I3) {Indecomposable};
\path (-2.5,-1.5) node[draw,shape=rectangle,scale=1.0] (I4) {Partially irreducible};
\path (0.0,-3.0) node[draw,shape=rectangle,scale=1.0] (I5) {Imperturbable};

\draw[thick,dashed,->] (I1) to (I2);
\draw[thick,->] (I1) to (I3);
\draw[thick,dashed,->,bend right] (I3) to (I2);
\draw[thick,->] (I2) to (I3);
\draw[thick, ->] (I1) to (I4);
\draw[thick, ->] (I3) to (I5);
\draw[thick,->] (I4) to (I5);

\end{tikzpicture}
\end{center}
\caption{Relationships between irreducibility conditions from Theorems \ref{thm:connect1} and \ref{thm:connect2}. Dashed arrows apply to m-convex maps. Partial irreducibility is equivalent to imperturbability for subadditive maps. }
\end{figure}
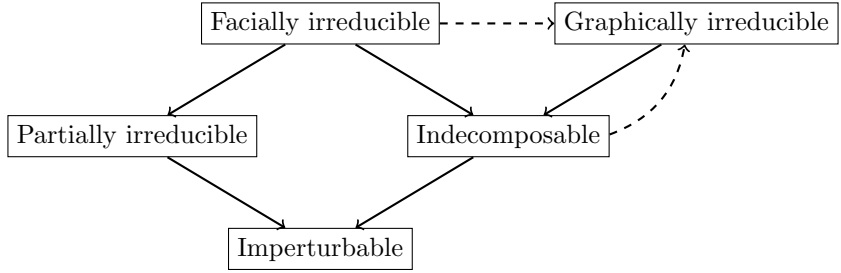



\section{Checking Irreducibility Conditions} \label{sec:SAT}

In this section we describe efficient ways to check the irreducibility conditions above. We begin by describing how irreducibility questions can be translated into Boolean satisfiability problems. 
A \emph{monotone Boolean function} is a function $g: \{0,1\}^n \rightarrow \{0, 1\}^n$ that is order-preserving. 
Each entry of a monotone Boolean function can be expressed as a Boolean formula using only the operations $\vee$ and $\wedge$, the entries $x_1, \ldots, x_n$ of the input, and the constants $0$ and $1$.   

\begin{definition} \label{def:sig1}
Let $f$ be an m-topical map on $\R^n_{\ge 0}$. The \emph{upper signature} of $f$ is the monotone Boolean function $\overline{f}$ on $\{0,1\}^n$ defined by 
$$\overline{f}(e_J)_i = 1 \text{ if and only if } \lim_{t \rightarrow \infty} f(\mathbf{1} + t e_J)_i = \infty.$$ 
The \emph{lower signature} of $f$ is the monotone Boolean function $\underline{f}$ on $\{0,1\}^n$ defined by
$$\underline{f}(e_J)_i = 1 \text{ if and only if } f(e_J)_i > 0.$$
\end{definition}

Note that both the upper and lower signature of a topical map have trivial fixed points $0$ and $\mathbf{1}$ in $\{0,1\}^n$.  This implies that both signatures can be expressed using only the operations $\vee$ and $\wedge$ without any constants.  

Combining Definition \ref{def:sig1} with Definitions \ref{def:facial}, \ref{def:indecomposable}, \ref{def:partial}, and \ref{def:imperturb}, we have the following alternative characterizations of facial irreducibility, indecomposability, partial irreducibility, and imperturbability. 

\begin{proposition}
Let $f$ be an m-topical map on $\R^n_{\ge 0}$ with upper and lower signatures $\overline{f}$ and $\underline{f}$.  
\begin{enumerate}
\item $f$ is facially irreducible if and only if there is no nontrivial $x \in \{0,1\}^n$ such that $\underline{f}(x) \le x$. 
\item $f$ is indecomposable if and only if there is no nontrivial $x \in \{0,1\}^n$ such that $\overline{f}(x) \le x$. 
\item $f$ is partially irreducible if and only if there is no nontrivial $x \in \{0,1\}^n$ such that $\underline{f}(x) = x$. 
\item $f$ is imperturbable if and only if there are no nontrivial $x, y \in \{0,1\}^n$ such that $x \le y$, $\underline{f}(x) \ge x$, and $\overline{f}(y) \le y$. 
\end{enumerate}
\end{proposition}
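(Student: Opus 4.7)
The plan is a direct translation: for each of the four items, the Boolean condition on the signatures is designed to capture exactly the original definition, so the proof amounts to unpacking the definitions and invoking homogeneity and order-preservation at a few points. The common thread is that every $x \in \{0,1\}^n$ has the form $e_J$ for a unique $J \subseteq [n]$, with $x$ nontrivial exactly when $J$ is a nonempty proper subset. By Definition \ref{def:sig1}, the inequality $\underline{f}(e_J) \le e_J$ then says $f(e_J)_i = 0$ for all $i \notin J$, while $\overline{f}(e_J) \le e_J$ says $\lim_{t \to \infty} f(\mathbf{1} + t e_J)_i < \infty$ for all $i \notin J$; the reverse inequalities are read analogously.

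For statement (1), I would first show that a face $F_J$ is invariant if and only if $f(e_J) \in F_J$. One direction is immediate since $e_J \in F_J$. For the other, any $x \in F_J$ satisfies $x \le \|x\|_\infty e_J$, so order-preservation and homogeneity give $f(x) \le \|x\|_\infty f(e_J)$, and hence $f(x) \in F_J$ whenever $f(e_J) \in F_J$. The condition $f(e_J) \in F_J$ is exactly $\underline{f}(e_J) \le e_J$, proving (1). Statement (2) is then immediate from Definition \ref{def:indecomposable}: decomposability is the existence of a nonempty proper $J$ with $\overline{f}(e_J) \le e_J$, which is exactly the existence of a nontrivial $x \in \{0,1\}^n$ with $\overline{f}(x) \le x$.

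For statement (3), I would note that since order-preservation and homogeneity force $f$ to send each part into a single part, and every $x \in P_J$ is comparable to $e_J$, the part $P_J$ is invariant if and only if $f(e_J) \in P_J$, i.e., $\supp(f(e_J)) = J$. That is precisely $\underline{f}(e_J) = e_J$. Excluding the trivial parts $P_\varnothing$ and $P_{[n]}$ corresponds to excluding $x = 0$ and $x = \mathbf{1}$, which is the nontriviality hypothesis.

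For statement (4), setting $x = e_I$ and $y = e_J$, the imperturbability condition of Definition \ref{def:imperturb} becomes exactly: there exist $x \le y$ in $\{0,1\}^n$ with $x \neq 0$ and $y \neq \mathbf{1}$ satisfying $\underline{f}(x) \ge x$ (the sub-eigenvector clause $f(e_I)_i > 0$ for $i \in I$) and $\overline{f}(y) \le y$ (the super-eigenvector clause $\lim_{t \to \infty} f(\mathbf{1} + t e_J)_i < \infty$ for $i \notin J$). No serious obstacle is anticipated; the main care needed is in reducing face and part invariance to a single test on the generator $e_J$, and in tracking the nontriviality clauses so that the trivial cases where the Boolean inequalities hold automatically are correctly excluded.
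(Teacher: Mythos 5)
Your proof is correct and follows exactly the route the paper intends: the paper states this proposition without proof as an immediate combination of Definition \ref{def:sig1} with the four irreducibility definitions, and your argument simply supplies the routine details (reducing invariance of a face or part to the single test vector $e_J$ via order-preservation and homogeneity, and matching the nontriviality clauses). Nothing is missing.
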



Nussbaum \cite{Nussbaum89} introduced a large class of m-topical maps that are constructed from averages. We will demonstrate how to find formulas for the upper and lower signatures of maps in an extended version of this class.  

For any $r \in \R$ and vector $\sigma \in \R^n_{\ge 0}$ with $\sum_i \sigma_i = 1$, the \emph{$(r,\sigma)$-average} of a vector $x \in \R^n_{\ge 0}$ is 
$$M_{r\sigma}(x) := \left(\sum_{i \in [n]} \sigma_i x_i^r \right)^{1/r} \text{ if } r \ne 0,$$
and
$$M_{r\sigma}(x) := \prod_{i \in [n]} x_i^{\sigma_i} \text{ if } r = 0.$$
Following \cite[Section 5.2]{AkianGaubertHochart20}, we also include two additional possibilities. When $r = \infty$, we let 
$$M_{\infty\sigma}(x) := \max_{i \in \supp(x)} x_i^{\sigma_i},$$
and when $r = -\infty$, 
$$M_{-\infty\sigma}(x) := \min_{i \in \supp(x)} x_i^{\sigma_i},$$
An m-topical function $f:\R^n_{\ge 0} \rightarrow \R^n_{\ge 0}$ is in \emph{class $\overline{M}$} if every entry of $f$ is a linear combination of $(r, \sigma)$-averages with $-\infty \le r \le \infty$.  If all of the $(r, \sigma)$-averages have $r \ge 0$ (respectively $r < 0$), then $f$ is in \emph{class $\overline{M}_+$} ($\overline{M}_-$). More generally \emph{class} $\overline{\mathcal{M}}$ is the smallest collection of m-topical maps that contains class $\overline{M}$ and is closed under addition and composition.  Likewise \emph{class} $\overline{\mathcal{M}}_+$ and $\overline{\mathcal{M}}_-$ are the smallest classes containing $\overline{M}_+$ and $\overline{M}_-$ respectively that are closed under addition and composition.  
It is also known (see e.g. \cite[Section 7.1]{Lins23}) that all maps in class $\overline{\mathcal{M}}_+$ are multiplicatively convex.  This includes a widely studied class of homogeneous maps that come from nonnegative tensors as well as max-algebra linear maps. 

If $f$ is in class $\overline{M}$, then it is simple to find formulas for its upper and lower signatures.  To get the upper signature $\overline{f}$, apply the following changes to the functions defining $f$: 
\begin{itemize}
\item Replace any $M_{r\sigma}$ with $M_{\infty \sigma}$ when $r \ge 0$ and with $M_{-\infty \sigma}$ when $r < 0$.  
\item Remove any constant coefficients.
\item Replace addition with the max operation $\vee$.  
\end{itemize}
The \emph{lower signature} $\underline{f}$ can be obtained using the same changes, except $M_{0 \sigma}$ is replaced by $M_{-\infty \sigma}$.
Note that a different signature function for maps in class $\overline{\mathcal{M}}$ was introduced in \cite[Section 5.2]{AkianGaubertHochart20}.  Their construction is similar to ours, but it is not generally a Boolean function. 
The following lemma shows how to find upper and lower signatures for maps in class $\overline{\mathcal{M}}$ that are built from maps in $\overline{M}$ using composition and linear combinations.  
\begin{lemma}
Let $f$ and $g$ be m-topical maps on $\R^n_{\ge 0}$. If $\overline{f}, \overline{g}$ are the upper signatures and $\underline{f}, \underline{g}$ are the lower signatures for $f$ and $g$ respectively, then
\begin{enumerate}
\item \label{item:comp} $\overline{f} \circ \overline{g}$ is the upper signature for $f \circ g$. 
\item \label{item:comp2} $\underline{f} \circ \underline{g}$ is the lower signature for $f \circ g$. 
\item \label{item:sum} $\overline{f} \vee \overline{g}$ is the upper signature for $a f + b g$ when $a, b > 0$.  
\item \label{item:sum2} $\underline{f} \vee \underline{g}$ is the lower signature for $a f + b g$ when $a, b > 0$.  
\end{enumerate}
\end{lemma}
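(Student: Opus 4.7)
The four items split into two families: the linear-combination cases (3) and (4), which are essentially definitional, and the composition cases (1) and (2), which need a comparability argument. I would handle them in that order.

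For (4), I would simply unpack: since $a, b > 0$ and $f(e_J), g(e_J) \in \R^n_{\ge 0}$, we have $(af + bg)(e_J)_i > 0$ iff $f(e_J)_i > 0$ or $g(e_J)_i > 0$, which is $(\underline{f} \vee \underline{g})(e_J)_i = 1$. For (3), the key observation is that $t \mapsto f(\mathbf{1} + te_J)_i$ is monotone nondecreasing in $t$ by order-preservation, and likewise for $g$, so both limits exist in $[0,\infty]$ and the limit of $af(\mathbf{1}+te_J)_i + bg(\mathbf{1}+te_J)_i$ is $\infty$ iff at least one of them is, i.e., iff $(\overline{f} \vee \overline{g})(e_J)_i = 1$.

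For (2), I would set $K := \{j : \underline{g}(e_J)_j = 1\} = \supp(g(e_J))$. If $K = \varnothing$ then $g(e_J) = 0$, so by homogeneity $f(g(e_J)) = 0$, matching $\underline{f}(0) = 0$. Otherwise $g(e_J)$ and $e_K$ have the same support so are comparable in $\R^n_{\ge 0}$. Because $f$ is order-preserving and homogeneous, $f(g(e_J))$ and $f(e_K)$ are also comparable, so $f(g(e_J))_i > 0$ iff $f(e_K)_i > 0$, which gives $\underline{f \circ g}(e_J)_i = \underline{f}(\underline{g}(e_J))_i$.

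For (1), I would use the continuous extensions of $f$ and $g$ to $(0,\infty]^n$ provided by Proposition \ref{prop:extend}. Since $\mathbf{1} + te_J \to \omega_J$ in $(0,\infty]^n$ as $t \to \infty$, continuity gives $g(\mathbf{1}+te_J) \to g(\omega_J)$. By the definition of $\overline{g}$, the entries of $g(\omega_J)$ that equal $\infty$ are exactly those indexed by $K := \{j : \overline{g}(e_J)_j = 1\}$, and since $g(\omega_J) \ge g(\mathbf{1}) \gg 0$ entrywise, the remaining entries are positive reals that are bounded above (by the definition of $K$) and below (by entries of $g(\mathbf{1})$). Thus $g(\omega_J)$ and $\omega_K$ are comparable in $(0,\infty]^n$: there exist $0 < \alpha \le \beta < \infty$ with $\alpha \omega_K \le g(\omega_J) \le \beta \omega_K$. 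Applying the extension of $f$ gives $\alpha f(\omega_K) \le f(g(\omega_J)) \le \beta f(\omega_K)$, so $f(g(\omega_J))_i = \infty$ iff $f(\omega_K)_i = \infty$. Combining with continuity, $\overline{f \circ g}(e_J)_i = 1$ iff $f(g(\omega_J))_i = \infty$ iff $f(\omega_K)_i = \infty$ iff $\overline{f}(e_K)_i = 1$, which is $(\overline{f} \circ \overline{g})(e_J)_i$.

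The main obstacle is the comparability step in (1): specifically, verifying that the finite entries of $g(\omega_J)$ lie in a compact subset of $(0,\infty)$. The lower bound is clear because $g(\omega_J) \ge g(\mathbf{1}) \gg 0$ since $g$ is order-preserving and maps $\R^n_{>0}$ into itself. The upper bound at positions $j \notin K$ is the definition of $K$. The degenerate cases $K = \varnothing$ (so $\omega_K = \mathbf{1}$) and $K = [n]$ (so $\omega_K$ is the all-$\infty$ vector) should be checked separately so that the comparability inequality is interpreted consistently.
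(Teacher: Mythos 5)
Your proposal is correct and follows essentially the same route as the paper: items (3) and (4) by direct unpacking of the definitions, item (2) by comparability of $g(e_J)$ with $e_K$, and item (1) by passing to the continuous extension on $(0,\infty]^n$ and noting that $g(\omega_J)$ is comparable to $\omega_K$ before applying $f$. Your extra care with the degenerate cases $K=\varnothing$ and $K=[n]$ is a harmless refinement; the paper's argument covers them implicitly since $\omega_K$ and the extension of $f$ are defined for all $K\subseteq[n]$.
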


\begin{proof}
Recall by Proposition \ref{prop:extend} that for any $J \subseteq [n]$ and any m-topical map $f$ on $\R^n_{\ge 0}$, 
$$\lim_{t \rightarrow \infty} f(\mathbf{1} + te_J) = f(\omega_J)$$
where $\omega_J$ is given by \eqref{eq:omega}.

Proof of \eqref{item:comp}. Let $J \subseteq [n]$.  Let $I = \supp(\overline{g}(e_J))$, so that $e_I = \overline{g}(e_J)$.  By definition, $g(\omega_J)_i$ is finite if and only if $i \notin I$.  Therefore there exist real constants $\alpha, \beta > 0$ such that $\alpha \omega_I \le g(\omega_J) \le \beta \omega_I$.  Then $\alpha f(\omega_I) \le f(g(\omega_J)) \le \beta f(\omega_I)$.  Therefore $f(g(\omega_J))_i = \infty$ if and only if $\overline{f}(e_I)_i = 1$ which is equivalent to $\overline{f}(\overline{g}(e_J))_i = 1$.  

Proof of \eqref{item:comp2}. Let $J \subseteq [n]$.  Let $I = \supp(g(e_J))$. Observe that $\underline{g}(e_J) = e_I$ and $g(e_J)$ is comparable to $e_I$.  Therefore $f(g(e_J))$ and $\underline{f}(\underline{g}(e_J))$ are comparable.  Therefore $f(g(e_J))_i > 0$ if and only if $\underline{f}(\underline{g}(e_J))_i = 1$.  


Proof of \eqref{item:sum}. 
For any $J \subseteq [n]$, $ af(\omega_J)_i + bg(\omega_J)_i  = \infty$ if and only if $f(\omega_J)_i = \infty$  or  $g(\omega_J)_i = \infty.$ The latter condition is equivalent to $\overline{f}(e_J)_i \vee \overline{g}(e_J)_i$ which proves the assertion.

Proof of \eqref{item:sum}. It is clear that $a f(e_J)_i + b g(e_J)_i > 0$ if and only if $f(e_J)_i > 0$ or $g(e_J)_i > 0$ which in turn holds if and only if $(\underline{f} \vee \underline{g})(e_J)_i = 1$.
\end{proof}

\begin{remark}
If we have a formula for the upper signature $\overline{f}$ of an m-topical map $f$ on $\R^n_{\ge 0}$, then we can translate the conditions for $f$ to be indecomposable into a Boolean satisfiability problem.  The map $f$ is decomposable if and only if there is a nontrivial $x \in \{0,1\}^n$ such that $\overline{f}(x) \le x$.  So $f$ is decomposable if and only if we can satisfy the following Boolean conditions.  

First, $x$ must be nontrivial which means we must satisfy the following condition
$$(x_1 \vee x_2 \vee \ldots \vee x_n) \wedge (\neg x_1 \vee \neg x_2 \vee \ldots \vee \neg x_n).$$
Second, we need $\overline{f}(x) \le x$, which corresponds to
$$(x_1 \vee \neg \overline{f}(x)_1) \wedge (x_2 \vee \neg \overline{f}(x)_2) \wedge \ldots \wedge (x_n \vee \neg \overline{f}(x)_n).$$
The first condition is already in conjunctive normal form.  The second condition can be converted to conjunctive normal form using the Tseitin transformation.  This may result in a longer Boolean formula with more variables, but the length of the resulting formula is linear in the length of the original.  Furthermore, the Tseitin transformation is built into many SAT-solvers such as Z3.  
\end{remark}

%
%

\begin{remark}
Checking whether an m-topical map $f$ is imperturbable with an SAT-solver requires converting the following collection of conditions to conjunctive normal form.  If $f$ is not imperturbable, then we should be able to find $x, y\in \{0,1\}^n$ such that the sets $I := \supp(x)$ and $J := \supp(y)$ are a counterexample to Definition \ref{def:imperturb}. So $x$ and $y$ should satisfy the following four Boolean conditions. First we need $x$ to be nonzero and $y \neq \mathbf{1}$.
$$(x_1 \vee x_2 \vee \ldots \vee x_n) \wedge (\neg y_1 \vee \neg y_2 \vee \ldots \vee \neg y_n)$$
We also need $\supp(x) \subseteq \supp(y)$.  
$$(\neg x_1 \vee y_1) \wedge (\neg x_2 \vee y_2) \wedge \ldots \wedge (\neg x_n \vee y_n).$$
We need $\underline{f}(x) \ge x$ so 
$$(\neg x_1 \vee \underline{f}(x)_1) \wedge (\neg x_2 \vee \underline{f}(x)_2) \wedge \ldots \wedge (\neg x_n \vee \underline{f}(x)_n).$$
Finally we need $\overline{f}(y) \le y$.
$$(y_1 \vee \neg \overline{f}(y)_1) \wedge (y_2 \vee \neg \overline{f}(y)_2) \wedge \ldots \wedge (y_n \vee \neg \overline{f}(y)_n).$$
Observe that the first two conditions are already in conjunctive normal form, while the latter two are typically not. We can convert the last two Boolean expressions to conjunctive normal form using the Tseitin transformation.  Then the conjunction of all four conditions can be given to an SAT-solver to check.  
\end{remark}

\begin{remark}
It is considerably easier to check whether an m-convex m-topical maps is indecomposable or imperturbable. As noted in Theorem \ref{thm:connect1}\eqref{item:mconvex1} m-convex m-topical maps are indecomposable if and only if they are graphically irreducible. Therefore checking whether an m-convex m-topical map is indecomposable can be done by finding the strongly connected components of $G(f)$.  
These can be efficiently computed using Tarjan's algorithm, which runs in $O(n^2)$ time for m-topical maps on $\R^n_{\ge 0}$. 

If the graph $G(f)$ is not strongly connected, it can still help determine whether or not $f$ is imperturbable.  The following is \cite[Corollary 4.4]{AkianGaubertHochart20}, restated for the multiplicatively topical rather than additively topical setting. 

\begin{theorem}
If $f$ is an m-convex m-topical map on $\R^n$, then $f$ is imperturbable if and only if $G(f)$ has a unique final class $I \subseteq [n]$ and there exists $k > 0$ such that $f^k(e_{I^c}) = 0$.  
\end{theorem}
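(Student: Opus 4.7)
The plan is to prove the two directions separately. For sufficiency ($\Leftarrow$), I would argue by contrapositive. Suppose $f$ is not imperturbable, so we can pick nonempty $I' \subseteq J' \subsetneq [n]$ with $\underline{f}(e_{I'}) \ge e_{I'}$ and $\overline{f}(e_{J'}) \le e_{J'}$. By m-convexity and the argument in the proof of Theorem \ref{thm:connect1}\eqref{item:mconvex1}, the super-eigenvector condition $\overline{f}(e_{J'}) \le e_{J'}$ forces $[n] \setminus J'$ to be closed in $G(f)$. Every nonempty closed subset contains a final class, so by uniqueness of $I$ we have $I \subseteq [n] \setminus J'$, giving $e_{I'} \le e_{J'} \le e_{I^c}$. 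Iterating the monotone Boolean map $\underline{f}$ preserves the sub-eigenvector inequality, so $\underline{f}^k(e_{I^c}) \ge \underline{f}^k(e_{I'}) \ge e_{I'} \ne 0$, contradicting $f^k(e_{I^c}) = 0$ (which is equivalent to $\underline{f}^k(e_{I^c}) = 0$, since the lower signature tracks the support of iterates of $f$ on Boolean inputs).

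For necessity ($\Rightarrow$), assume $f$ is imperturbable. I would treat the two conclusions in reverse order. Granting that $I$ is the unique final class, the condition $f^k(e_{I^c}) = 0$ is easy to extract: since $I$ is closed in $G(f)$, the m-convex argument gives $\overline{f}(e_{I^c}) \le e_{I^c}$ and hence $\underline{f}(e_{I^c}) \le e_{I^c}$. By monotonicity of $\underline{f}$ the sequence $\underline{f}^k(e_{I^c})$ is decreasing in $\{0,1\}^n$, so it stabilizes at some $e_L$ with $L \subseteq I^c$ and $\underline{f}(e_L) = e_L$. If $L$ were nonempty, the pair $(L, I^c)$ would witness a failure of imperturbability, because $I^c$ is a super-eigenvector support and $e_L$ is a sub-eigenvector with $L \subseteq I^c$. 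Therefore $L = \varnothing$, and $\underline{f}^k(e_{I^c}) = 0$ for some $k$.

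The remaining claim is that $G(f)$ has a unique final class. Suppose for contradiction that $C_1, C_2$ are two distinct final classes. Set $J' := [n] \setminus C_1$; then $\omega_{J'}$ is a super-eigenvector (since $C_1$ is closed) and $C_2 \subseteq J'$. To contradict imperturbability it suffices to produce a nonempty $I' \subseteq C_2$ with $\underline{f}(e_{I'}) \ge e_{I'}$. My plan is to form the reduced map $g : \R^{|C_2|}_{\ge 0} \to \R^{|C_2|}_{\ge 0}$ by $g(y) := (f(\iota(y)))_{C_2}$, where $\iota(y)$ extends $y$ by zeros outside $C_2$; continuity, homogeneity, and order-preservation of $g$ are inherited from $f$. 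When $f$ is expressed as a combination of $(r, \sigma)$-averages from class $\overline{\mathcal{M}}_+$, each coordinate $f_i$ with $i \in C_2$ involves only variables $x_j$ with $j \in C_2$, because $C_2$ is final and arcs of $G(f)$ correspond precisely to variable occurrences; hence every averaging term remains positive on $\iota(\R^{|C_2|}_{>0})$ and $g$ preserves the interior. Proposition \ref{prop:KR} then yields a nonnegative eigenvector of $g$, whose support $I' \subseteq C_2$ is nonempty and satisfies $\underline{f}(e_{I'}) \ge e_{I'}$.

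The main obstacle is confirming the interior-preserving property of $g$, i.e., that $f(\iota(y))_i > 0$ for every $i \in C_2$ whenever $y \gg 0$. For an explicit representation of $f$ in class $\overline{\mathcal{M}}_+$ this reduces to the term-by-term check above, but for a general m-convex $f$ the step is more delicate because $\underline{f}$ can be strictly smaller than $\overline{f}$ on Boolean inputs. One route is to approximate $f$ by maps in $\overline{\mathcal{M}}_+$ and pass to the limit; another is to use Theorem \ref{thm:slice}\eqref{item:D} and construct a diagonal matrix $D$ (large on $C_1$, small on $C_2$, or vice versa) for which $D \circ f$ cannot have an eigenvector in $\R^n_{>0}$, directly contradicting imperturbability.
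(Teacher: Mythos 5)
First, note that the paper does not actually prove this statement: it is quoted directly from \cite[Corollary 4.4]{AkianGaubertHochart20}, so your argument cannot be compared with an in-paper proof, only checked on its own terms. Most of it holds up. The sufficiency direction is correct (and, as you implicitly show, does not even need m-convexity: $\overline{f}(e_{J'}) \le e_{J'}$ forces $[n]\setminus J'$ to be closed by monotonicity alone, since $e_{\{j\}} \le e_{J'}$ for $j \in J'$). The derivation of $f^k(e_{I^c})=0$ from imperturbability, via the decreasing sequence $\underline{f}^k(e_{I^c})$ stabilizing at a fixed point $e_L$ that would otherwise violate imperturbability, is also correct.

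The genuine gap is exactly the one you flag and then leave open: in the uniqueness-of-final-class step you need the reduced map $g(y)=(f(\iota(y)))_{C_2}$ to map $\R^{C_2}_{>0}$ into itself, and you only verify this for explicit class-$\overline{\mathcal{M}}_+$ representations. Neither of your proposed escapes is reliable as stated: approximating a general m-convex map by maps in $\overline{\mathcal{M}}_+$ need not preserve $G(f)$ or its final classes, and the diagonal-matrix route from Theorem \ref{thm:slice}\eqref{item:D} still requires a sub-eigenvector to trap the orbit from below, which is what you are trying to produce. The fix is short and uses convexity directly. Pass to additive coordinates: $h=\log\circ f\circ\exp$ is convex, order-preserving, and additively homogeneous. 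If there is no arc from $i$ to $j$, then $t\mapsto h_i(u+te_j)$ is convex, non-decreasing, and bounded above on $[0,\infty)$ (bounded at $u=0$ by the definition of $G(f)$, and for general $u$ because $u\le \|u\|_\infty\mathbf{1}$ and $h_i(\|u\|_\infty\mathbf{1}+te_j)=\|u\|_\infty+h_i(te_j)$); a convex, non-decreasing function of one variable that is bounded above is constant, so $h_i$, and hence $f_i$ on $\R^n_{>0}$ and by continuity on $\R^n_{\ge 0}$, is independent of the $j$-th coordinate. Since $C_2$ is final, each $f_i$ with $i\in C_2$ therefore depends only on $x_{C_2}$, so $g(y)=(f(z))_{C_2}$ for any $z\gg 0$ with $z_{C_2}=y$, which is entrywise positive. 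With that, $g$ is m-topical, Proposition \ref{prop:KR} yields a nonnegative eigenvector with positive eigenvalue, and its support gives the sub-eigenvector $e_{I'}$ with $I'\subseteq C_2\subseteq [n]\setminus C_1$ that contradicts imperturbability. With this lemma inserted, your proof is complete.
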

\end{remark}

\section{Unique eigenvectors} \label{sec:unique}

For nonnegative matrices, irreducibility guarantees not only the existence, but also the uniqueness (up to scaling) of an eigenvector with all positive entries.  It was observed in \cite[Theorem 5.1]{Lins23} that if $f$ is an m-topical map that is real analytic in $\R^n_{>0}$ and the set of eigenvectors of $f$ in $\R^n_{>0}$ is nonempty and bounded in Hilbert's projective metric, as is the case for imperturbable maps, then the eigenvector must be unique up to scaling.  The importance of real analyticity was also noted by Oshime who showed that any facially irreducible m-topical map which is real analytic on $\R^n_{>0}$ has a unique eigenvector in $\R^n_{>0}$ \cite[Proposition 3.9]{Oshime92}.  

Confirming that an eigenvector is unique can be more challenging for maps which are not real analytic, Morishima \cite{Morishima64} introduced an irreducibility condition for m-topical maps which guarantees not only the existence, but also the uniqueness of an eigenvector in $\R^n_{>0}$. Note that Morishima used the name indecomposability for this condition, however it is not to indecomposability as defined by Definition \ref{def:indecomposable}.   Later Oshime \cite{Oshime83} described a weakening of Morishima's condition called non-sectionality which also guarantees the existence and uniqueness of an entrywise positive eigenvector. Both Morishima and Oshime's conditions make use of the following condition. 

\begin{definition}
Let $f$ be an m-topical map on $\R^n_{\ge 0}$ and let $x \in \R^n_{>0}$.  Then $f$ satisfies \emph{condition (M)} at $x$ if for every nonempty proper subset $J \subset [n]$, there exists $i \notin J$ such that
$$f(x+t e_J)_i > f(x)_i.$$
\end{definition}

Morishima's condition is equivalent to facial irreducibility combined with the assumption that condition (M) is satisfied at every $x \in \R^n_{>0}$. Oshime's non-sectionality condition is equivalent to indecomposability combined with condition (M) at every $x \in \R^n_{>0}$.  

Requiring condition (M) at every $x \in \R^n_{>0}$ is unnecessary if you know the exact value of one eigenvector $u \in \R^n_{>0}$. In that case, confirming that condition (M) holds at $u$ is sufficient to guarantee that $u$ is the only eigenvector up to scaling.  

\begin{lemma} \label{lem:condM}
Let $f$ be an m-topical map on $\R^n$ with an eigenvector $u \in \R^n_{>0}$.  If $f$ satisfies condition (M) at $u$, then $u$ is the only eigenvector of $f$ in $\R^n_{>0}$ up to scaling.  
\end{lemma}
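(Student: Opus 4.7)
The plan is to argue by contradiction. Suppose $v \in \R^n_{>0}$ is an eigenvector of $f$ that is not a positive scalar multiple of $u$, with $f(v) = \lambda v$ and $f(u) = \mu u$. First I would rescale $v$ by the smallest positive scalar for which $u \leq v$ entrywise, so that the set $A := \{i \in [n] : u_i = v_i\}$ is nonempty; since $v$ is not a scalar multiple of $u$, its complement $B := [n] \setminus A$ is also nonempty. Order-preservation applied to $u \leq v$ gives $\mu u \leq \lambda v$, and evaluating in any coordinate of $A$ yields $\mu \leq \lambda$; the symmetric rescaling from below (taking the largest $\alpha > 0$ with $\alpha v \leq u$) produces the reverse inequality, so $\mu = \lambda$. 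Set $\delta := \min_{j \in B}(v_j - u_j) > 0$.

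Next I would apply condition (M) at $u$ with the nonempty proper subset $J = B$, obtaining an index $i^* \in A$ such that $f(u + t e_B)_{i^*} > f(u)_{i^*} = \lambda u_{i^*}$ for $t > 0$. For every $t \in (0, \delta]$ we have $u + t e_B \leq v$ entrywise, since the two vectors agree on $A$ and on $B$ satisfy $u_j + t \leq u_j + (v_j - u_j) = v_j$. Order-preservation of $f$ then gives $f(u + t e_B) \leq f(v) = \lambda v$, which evaluated at $i^* \in A$ (where $v_{i^*} = u_{i^*}$) becomes $f(u + t e_B)_{i^*} \leq \lambda u_{i^*}$, contradicting the strict inequality from condition (M).

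The main subtlety is the choice of $J$ in condition (M). Taking $J = B$ (rather than $J = A$) is what forces the distinguished coordinate $i^*$ to lie in $A$, where $u$ and $v$ coincide; this is precisely what makes the tight upper bound $\lambda v_{i^*} = \lambda u_{i^*}$ coming from the eigenvector relation for $v$ match $f(u)_{i^*}$ exactly, so that there is no room left for the strict inequality supplied by condition (M) when $t \leq \delta$.
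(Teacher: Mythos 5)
Your proof is correct and follows essentially the same route as the paper: normalize $v$ so that it dominates $u$ and touches it on a nonempty set $A$, apply condition (M) with $J$ equal to the complement of $A$, and derive a contradiction at a touching coordinate. The only (harmless) difference is that you establish equality of the two eigenvalues by a direct order-preservation/rescaling comparison, whereas the paper invokes the cone spectral radius formula of Proposition \ref{prop:KR}.
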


\begin{proof}
We may assume by scaling $f$ that $r(f) = 1$. Suppose that $v \in \R^n_{>0}$ is a second eigenvector for $f$ that is not a scalar multiple of $v$. Both $u$ and $v$ have eigenvalues equal to 1, since by Proposition \ref{prop:KR}
$$r(f) = \lim_{k \rightarrow \infty} \|f^k(u)\|^{1/k} = \lim_{k \rightarrow \infty} \|f^k(v)\|^{1/k} = 1.$$ 
We may assume by scaling $v$ that $v \ge u$ and $v_i = u_i$ for some $i \in [n]$. Let $I = \{i \in [n] : v_i = u_i \}$ and $J = [n] \bs I$. There exists $t > 0$ small enough so that $u + te_J \le v.$ 
But then 
$$u_i = f(u)_i < f(u + te_J)_i \le f(v)_i = v_i$$
for every $i \notin J$. This contradicts the assumption that $u_i = v_i$ for all $i \in I$, so we conclude that there cannot be two linearly independent eigenvectors in $\R^n_{>0}$. 
\end{proof}

A weakening of condition (M) leads to necessary and sufficient conditions for an eigenvector in $\R^n_{>0}$ to be unique.

\begin{definition}
Let $f$ be an m-topical map on $\R^n_{\ge 0}$ and let $x \in \R^n_{>0}$.  Then $f$ satisfies \emph{condition (N)} at $x$ if for every pair of nonempty disjoint sets $I, J \subset [n]$, either there exists $j \in J$ such that
$$f(x+t e_{J^c})_j > f(x)_j$$
for all $t > 0$ or there exists $i \in I$ such that
$$f(x-t e_{I^c})_i < f(x)_i$$ 
for all $t > 0$ sufficiently small.
\end{definition}

The following is \cite[Theorem 4.8]{AkianGaubertHochart20}.
\begin{theorem} \label{thm:condN}
Let $f$ be an m-topical map on $\R^n$ with an eigenvector $u \in \R^n_{>0}$.  Then $u$ is the unique eigenvector of $f$ in $\R^n_{>0}$ up to scaling if and only if $f$ satisfies condition (N) at $u$. 
\end{theorem}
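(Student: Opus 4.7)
The plan is to prove each direction of the equivalence separately, translating freely to the additive setting via $T := \log \circ f \circ \exp$, an additively topical map fixing $\hat u := \log u$ (after normalizing $r(f) = 1$ by homogeneity of $f$). For the forward implication $(\Leftarrow)$, I argue by contradiction. Assume a second eigenvector $v \in \R^n_{>0}$ not proportional to $u$; by Proposition \ref{prop:KR}, both eigenvalues may be normalized to $1$. Non-proportionality gives $m := \min_i v_i/u_i < M := \max_i v_i/u_i$, so the sets $I := \{i : v_i/u_i = m\}$ and $J := \{j : v_j/u_j = M\}$ are nonempty, proper, and disjoint subsets of $[n]$. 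Apply (N) at $u$ with this pair. In alternative (A), use the rescaled fixed point $\tilde v := v/M$ (satisfying $\tilde v \le u$ with equality exactly on $J$): the inequality $\tilde v + te_{J^c} \le u$ for small $t > 0$ together with $f(\tilde v) = \tilde v$ and monotonicity force $f(\tilde v + te_{J^c})_j = u_j$ for $j \in J$. This equality is then transferred to $u + te_{J^c}$ via a sandwich bounding $u + te_{J^c}$ above by an appropriate scaling of $\tilde v + t' e_{J^c}$, contradicting the strict inequality demanded by (A). Alternative (B) is handled symmetrically using $v' := v/m \ge u$ and the perturbation $u - te_{I^c}$.

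For the reverse implication $(\Rightarrow)$, I argue contrapositively: failure of (N) at $u$ for some disjoint nonempty $I, J$ yields, by monotonicity, constants $t^*, s^* > 0$ with $f(u + te_{J^c})_j = u_j$ for $j \in J$, $t \in [0, t^*]$, and $f(u - se_{I^c})_i = u_i$ for $i \in I$, $s \in [0, s^*]$. In additive coordinates, consider the order interval
$$\mathcal{K} := \{x \in \R^n : \hat u - s^* e_{I^c} \le x \le \hat u + t^* e_{J^c}\}.$$
Combining the flatness conditions with monotonicity and sup-norm nonexpansiveness of $T$ shows that $\mathcal{K}$ is $T$-invariant, and the only translate of $\hat u$ lying in $\mathcal{K}$ is $\hat u$ itself. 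Iterating $T$ on the upper corner $\hat u + t^* e_{J^c}$ produces a decreasing sequence converging to a fixed point $\hat v_+ \in \mathcal{K}$ with $(\hat v_+)_j = \hat u_j$ for $j \in J$; iterating on the lower corner gives $\hat v_-$ with $(\hat v_-)_i = \hat u_i$ for $i \in I$. If either $\hat v_\pm \ne \hat u$, exponentiating yields the desired second eigenvector. In the degenerate case $\hat v_+ = \hat v_- = \hat u$, one exploits the joint flatness by perturbing along the direction $e_I - e_J$ and applying a Brouwer fixed-point argument on the quotient $\mathcal{K} / \R\mathbf{1}$ to extract a nontrivial fixed point.

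The main obstacle is this degenerate case in the reverse direction, where iteration from the corners of $\mathcal{K}$ collapses to $\hat u$; handling it requires a careful degree-theoretic argument (as executed in \cite[Theorem 4.8]{AkianGaubertHochart20}) that combines both flatness conditions to locate a fixed point strictly between the corners. A secondary technical step is the sandwich transfer in the forward direction: pushing the equality $f(\tilde v + te_{J^c})_j = u_j$ over to $f(u + te_{J^c})_j = u_j$ requires careful manipulation of scalings and the order-preserving property of $f$.
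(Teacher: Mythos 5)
Both directions of your outline contain genuine gaps; note also that the paper itself offers no proof of this theorem, only the citation to \cite[Theorem 4.8]{AkianGaubertHochart20}, so the attempt has to stand on its own. In the direction ``(N) $\Rightarrow$ uniqueness'' you have the roles of $I$ and $J$ reversed, and the ``sandwich transfer'' you need to close case (A) is not just delicate but false. Take $f(x_1,x_2)=(x_1,\max(x_1,x_2))$, $u=(1,1)$, $v=(e^{-1},1)$: these are non-proportional eigenvectors, $M=1$, your $J=\{2\}$, $\tilde v=v$, and indeed $f(\tilde v+te_{J^c})_2=\max(e^{-1}+t,1)=1$ for small $t$, yet $f(u+te_{J^c})_2=1+t>u_2$ for \emph{every} $t>0$. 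So alternative (A) genuinely holds for your pair $(\{1\},\{2\})$ and no contradiction is available; condition (N) does fail here, but for the swapped pair $(\{2\},\{1\})$. The repair is to normalize the other way: scale $v$ so that $v\ge u$ with $\min_i v_i/u_i=1$, set $J:=\{j: v_j=u_j\}$ and $I:=\{i: v_i/u_i=\max_l v_l/u_l\}$. Then $u+te_{J^c}\le v$ for small $t$ gives $f(u+te_{J^c})_j\le f(v)_j=u_j$ directly, killing (A), and $u-te_{I^c}\ge (\max_l v_l/u_l)^{-1}v$ kills (B); no transfer step occurs. This is exactly the orientation used in the proof of Lemma \ref{lem:condM}, where the perturbation direction is the complement of the agreement set.

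In the converse direction, the ``degenerate case'' is where the whole difficulty sits, and deferring it to \cite[Theorem 4.8]{AkianGaubertHochart20} --- the statement being proved --- makes the argument circular. But the degenerate case does not occur if you use the two failed alternatives \emph{jointly} rather than iterating from the two corners separately. Failure of (B) gives $s^*>0$ with $T(\hat u-se_{I^c})_i=\hat u_i$ for all $i\in I$ and $0<s\le s^*$; since $\hat u-se_{I^c}=\hat u+se_I-s\mathbf{1}$, additive homogeneity yields $T(\hat u+se_I)_i=\hat u_i+s$, and squeezing $\hat u+se_I\le \hat u+se_{J^c}\le \hat u+s\mathbf{1}$ gives $T(\hat u+se_{J^c})_i=\hat u_i+s$ for $i\in I$. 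Hence with $\tau:=\min(t^*,s^*)$ the nonincreasing iterates $y_k:=T^k(\hat u+\tau e_{J^c})$ satisfy $\hat u+\tau e_I\le y_k\le \hat u+\tau e_{J^c}$ for all $k$, and their limit is a fixed point of $T$ that equals $\hat u$ on $J$ and $\hat u+\tau\mathbf{1}$ on $I$, so it is not a translate of $\hat u$. No Brouwer or degree-theoretic argument is needed, and the collapse you worry about simply cannot happen once the $I$-coordinates are pinned by the failed alternative (B).
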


To determine whether either condition (M) or (N) holds, it helps to define local upper and lower signature functions.  If $f$ is m-topical on $\R^n_{\ge 0}$ and $u \in \R^n_{>0}$, then the \emph{upper local signature} of $f$ at $u$ is the monotone Boolean function $\overline{f}_u$ on $\{0,1\}^n$ defined by
$$\overline{f}_u(e_J)_i = 1 \text{ if and only if } f(u + te_J)_i > f(u)_i \text{ for all } t > 0.$$  
The \emph{lower local signature} of $f$ at $u$ is the monotone Boolean function $\underline{f}_u$ on $\{0,1\}^n$ defined by
$$\underline{f}_u(e_J)_i = 1 \text{ if and only if } f(u - te_J)_i < f(u)_i \text{ for all } t > 0 \text{ sufficiently small}.$$  

It is easy to find upper and lower local signatures for maps in $\overline{M}$. If $f$ is an m-topical map on $\R^n_{\ge 0}$ in class $\overline{M}$, then the upper local signature of $f$ at $u$ is determined by applying these changes to the formula for each entry of $f$. 
\begin{itemize}
\item For each $M_{\pm \infty \sigma}$, choose $\tau, \mu \in \R^n_{\ge 0}$ such that 
$$\supp(\tau) = \left\{i \in \supp(\sigma) : u_i = \max_{j \in \supp(\sigma)} u_j \right\}$$
and
$$\supp(\mu) = \left\{i \in \supp(\sigma) : u_i = \min_{j \in \supp(\sigma)} u_j \right\}.$$
\item Replace each $M_{\infty \sigma}$ with $M_{\infty \tau}$.
\item Replace $M_{-\infty \sigma}$ with $M_{-\infty \mu}$. 
\item Remove any constant coefficients.  
\item Replace any $M_{r\sigma}$ with $M_{\infty \sigma}$ when $r \in \R$.  
\item Replace addition with the max operation $\vee$.  
\end{itemize}
The lower local signature $\underline{f}_u$ can be found similarly, except for $M_{\pm \infty \sigma}$ which are dealt with as follows:
\begin{itemize}
\item Replace each $M_{\infty \sigma}$ with $M_{-\infty \tau}$.  
\item Replace $M_{-\infty \sigma}$ with $M_{\infty \mu}$.
\end{itemize}

The following lemma describes how to find the upper and lower local signatures for maps in $\overline{\mathcal{M}}$ that are compositions and linear combinations of maps in $\overline{M}$.  

\begin{lemma}
Let $f, g$ be m-topical maps on $\R^n_{\ge 0}$.  Let $u \in \R^n_{>0}$ and $v = g(u)$.  Then 
\begin{enumerate}
\item The upper local signature of $f \circ g$ at $u$ is $\overline{f}_v \circ \overline{g}_u$ and the lower local signature of $f \circ g$ at $u$ is $\underline{f}_v \circ \underline{g}_u$.  
\item For any $a, b > 0$, the upper local signature of $af + bg$ is $\overline{f}_u \vee \overline{g}_u$ and the lower local signature of $af + bg$ is $\underline{f}_u \vee \underline{g}_u$. 
\end{enumerate}
\end{lemma}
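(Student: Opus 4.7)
The plan is to exploit monotonicity and continuity of $f$ and $g$ to reduce the local behavior of $f \circ g$ at $u$ to the local behavior of $g$ at $u$ followed by that of $f$ at $v := g(u)$, and to handle sums by a direct monotone-function argument. A preliminary observation used throughout: since each entry map is order-preserving, the functions $t \mapsto f(u + t e_J)_i - f(u)_i$ and $t \mapsto f(u)_i - f(u - t e_J)_i$ are nonnegative and non-decreasing on their feasible domains, so their zero sets are downward closed. This makes the quantifier ``for all sufficiently small $t > 0$'' in the lower signature definition equivalent to ``for all $t > 0$ in the feasible range,'' symmetrically with the upper signature.

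For part (1), fix $J \subseteq [n]$ and set $I = \supp(\overline{g}_u(e_J))$. By monotonicity of $g$ in $t$, the support $I_t := \supp(g(u + t e_J) - v)$ is non-decreasing in $t$, and since $[n]$ is finite there exists $t_0 > 0$ with $I_t = I$ on $(0, t_0]$. On this range, continuity of $g$ and strict positivity of $g(u + t e_J)_j - v_j$ for $j \in I$ yield a sandwich
\[
v + \alpha(t) e_I \le g(u + t e_J) \le v + \beta(t) e_I,
\]
where $\alpha(t) := \min_{j \in I}(g(u+te_J)_j - v_j) > 0$ and $\beta(t) := \max_{j \in I}(g(u+te_J)_j - v_j) \to 0$ as $t \to 0^+$. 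Applying the order-preserving map $f$ and invoking the definition of $\overline{f}_v(e_I)_i$ gives both directions: if $\overline{f}_v(e_I)_i = 1$, the lower sandwich forces $f(g(u+te_J))_i > f(v)_i$ for $t \in (0, t_0]$, and monotonicity of $t \mapsto f(g(u + t e_J))_i$ extends this to all $t > 0$; if $\overline{f}_v(e_I)_i = 0$, choose $s_0 > 0$ with $f(v + s_0 e_I)_i = f(v)_i$, then for any $t$ small enough that $\beta(t) \le s_0$ the upper sandwich forces $f(g(u+te_J))_i = f(v)_i$. The lower local signature claim is the mirror image on the feasible interval $t \in (0, \min_{j \in J} u_j]$, using analogous bounds $v - \beta(t) e_I \le g(u - t e_J) \le v - \alpha(t) e_I$ with $I := \supp(\underline{g}_u(e_J))$.

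For part (2), fix $J \subseteq [n]$ and write $\phi_f(t) := f(u + t e_J)_i - f(u)_i$ and $\phi_g(t) := g(u + t e_J)_i - g(u)_i$, both nonnegative non-decreasing with downward-closed zero sets. Since $a, b > 0$, positivity of $a \phi_f(t) + b \phi_g(t)$ for all $t > 0$ is equivalent to positivity of $\phi_f(t) + \phi_g(t)$ for all $t > 0$, which in turn holds iff the intersection of the zero sets of $\phi_f$ and $\phi_g$ is empty on the feasible interval, and by downward closedness of each, iff at least one of $\phi_f, \phi_g$ has empty zero set. This gives the upper signature formula, and the lower signature formula follows by replacing $+te_J$ with $-te_J$.

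The main technical obstacle is handling the quantifier ``for all $t > 0$'' in the composition: although the support $I_t$ stabilizes at $I$ as $t \to 0^+$, for larger $t$ this support may be strictly larger than $I$, so the simple substitution $\overline{(f \circ g)}_u(e_J)_i = \overline{f}_v(\overline{g}_u(e_J))_i$ is only transparent in a small right-neighborhood of zero. This is handled by separately reducing to the small-$t$ regime via monotonicity of $t \mapsto f(g(u + t e_J))_i$, so that checking the signature condition on $(0, t_0]$ suffices.
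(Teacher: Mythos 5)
Your proof is correct and follows essentially the same route as the paper's: sandwich $g(u+te_J)$ between $v+\alpha(t)e_I$ and $v+\beta(t)e_I$ with $I=\supp(\overline{g}_u(e_J))$ and apply the definition of $\overline{f}_v$. You are in fact more careful than the paper, which glosses over the converse direction (the case $\overline{f}_v(e_I)_i=0$, where one needs $\beta(t)\to 0$ to produce a witness $t$ with equality) and over commuting the ``for all $t>0$'' quantifier with the disjunction in part (2) via the downward-closed zero sets.
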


\begin{proof}
Consider $f(g(u + te_J))$.  Let $I$ be the subset of $[n]$ such that $e_I = \overline{g}_u(e_J)$.  Then for all $t > 0$, there exists $s > 0$ such that $g(u+te_J) \ge g(u) + se_I$.  So 
$$f(g(u+te_J))_i \ge f(g(u) + s e_I)_i > f(g(u))_i$$
if and only if $\overline{f}_v(\overline{g}_u(e_J))_i  = 1$.  Thus $\overline{(f \circ g)}_u = \overline{f}_v \circ \overline{g}_u$. The proof that $\underline{(f \circ g)}_u = \underline{f}_v \circ \overline{g}_u$ is essentially the same. 

Now for any $a, b > 0$, observe that 
\begin{align*}
af(u + te_J) + bg(u+te_J) &- af(u) - bg(u)  \\ 
&= a(f(u+te_J) - f(u)) + b (g(u+te_J) - g(u)).
\end{align*}
Therefore 
$$af(u + te_J)_i + bg(u+te_J)_i > af(u)_i + bg(u)_i$$ 
if and only if  
$$f(u+te_J)_i > f(u)_i \text{ or } g(u+te_J)_i > g(u).$$
This shows that $\overline{(af+bg)}_u = \overline{f}_u \vee \overline{g}_u$.  The proof for the lower local signature is the same. 
\end{proof}

\begin{remark}
Determining whether an m-topical map $f$ on $\R^n_{\ge 0}$ satisfies condition (N) at $u \in \R^n_{>0}$ can be translated into a Boolean satisfiability problem using the upper and lower local signatures for $f$.  Indeed, $f$ satisfies condition (N) at $u$ if and only if 
for every pair of nonempty disjoint sets $I, J \subset [n]$, either $\overline{f}_u(e_{J^c})_j = 1$ for some $j \in J$ or $\underline{f}_u(e_{I^c})_i = 1$ for some $i \in I$. 

In the Boolean satisfiability problem, we are looking for $x, y \in \{0,1\}^n$ whose supports correspond to the sets $I^c$ and $J^c$.  If $u$ is not unique, then by Theorem \ref{thm:condN} we should be able to find $x, y$ with the following properties.  
First, the supports of $x$ and $y$ are both proper subsets of $[n]$. 
$$(\neg x_1 \vee \neg x_2 \vee \ldots \vee \neg x_n) \wedge (\neg y_1 \vee \neg y_2 \vee \ldots \vee \neg y_n).$$
Second, the supports of $x$ and $y$ cover $[n]$.
$$(x_1 \vee y_1) \wedge (x_2 \vee y_2) \wedge \ldots (x_n \vee y_n).$$
Then we need $\underline{f}_u(e_{I^c})_i = 0$ for all $i \in I$ and $\overline{f}_u(e_{J^c})_j = 0$ for all $j \in J$. The former can be expressed as 
$$(x_1 \vee \neg \underline{f}_u(x)_1) \wedge (x_2 \vee \neg \underline{f}_u(x)_2) \wedge \ldots \wedge (x_n \vee \neg \underline{f}_u(x)_n).$$
The last condition is 
$$(y_1 \vee \neg \overline{f}_u(y)_1) \wedge (y_2 \vee \neg \overline{f}_u(y)_2) \wedge \ldots \wedge (y_n \vee \neg \overline{f}_u(y)_n).$$
\end{remark}

\begin{remark}
If $f$ is m-convex and m-topical on $\R^n_{\ge 0}$ and $u \in \R^n_{>0}$, then \cite[Corollary 4.9]{AkianGaubertHochart20} gives a graph algorithm that can check whether or not $u$ is unique in $O(n^2)$ time.  
\end{remark}

\bibliography{DW2}
\bibliographystyle{plain}

\end{document}